\def\C{{\mathbb C}}
\def\T{{\mathbb T}}
\newtheorem{lemma}{Lemma}
\newtheorem{theorem}[lemma]{Theorem}
\newtheorem{example}[lemma]{Example}
\newtheorem{definition}[lemma]{Definition}
\newtheorem{remark}[lemma]{Remark}
\title{A switching method for constructing \\cospectral gain graphs}
\author{
Aida Abiad
\thanks{\texttt{a.abiad.monge@tue.nl}, Department of Mathematics and Computer Science, Eindhoven University of Technology, The Netherlands} \thanks{Department of Mathematics: Analysis, Logic and Discrete Mathematics, Ghent University, Belgium} \thanks{Department of Mathematics and Data Science, Vrije Universiteit Brussel, Belgium}
\and
Francesco Belardo
\thanks{\texttt{fbelardo@unina.it}, Department of Mathematics and Applications R. Caccioppoli, University of Naples Federico II, Italy} 
\and
Antonina P. Khramova
\thanks{\texttt{a.khramova@tue.nl}, Department of Mathematics and Computer Science, Eindhoven University of Technology, The Netherlands} 
}
\date{}
\begin{document}
\maketitle

\begin{abstract}
A gain graph over a group $G$, also referred to as $G$-gain graph, is a graph where an element of a group $G$, called gain, is assigned to
each oriented edge, in such a way that the inverse element is associated with the opposite
orientation. Gain graphs can be regarded as a generalization of signed graphs, among others. In this work, we show a new switching method to construct cospectral gain graphs. Some previous methods known for graph cospectrality follow as a corollary of our results.
\end{abstract}

\section{Introduction}


Gain graphs, which can be regarded as a generalization of signed graphs \cite{CDD2021}  where the group is $\{-1, 1\}$, have been extensively studied, see the survey paper \cite{zaslavskybib}. Gain graphs can be also considered as particular cases of biased graphs \cite{z1989} and are related to voltage graphs \cite{GT1977}. Among the key notions concerning signed and gain graphs, there are switching equivalence and balance, see for instance \cite{H1953,z1989}.

The spectrum of complex unit gain graphs and signed graphs with respect to the identical representation, both of which are special cases of gain graphs and the more general $G$-spectrum, has been considered in the literature, see \cite{zaslavskybib}. Reff \cite{R2012} introduced complex unit gain graphs and investigated their spectral properties by extending some fundamental results from spectral graph theory. For signed graphs, characterizations of some families have been done with respect to the identical representation, such as~\cite{ABDN2018,AHMM2018,BP2015}. Other cases related to gain graphs have been considered, including signed directed graphs~\cite{BBCRS2022} and quaternion unit gain graphs \cite{WD2022}. For a general group $G$, it has been shown that any $G$-gain graph with a cycle as its underlying graph is determined by its $G$-spectrum~\cite{CD2022}.

Less is known regarding cospectrality of gain graphs. The well-known switching method by Godsil and McKay \cite{gm1982} (GM-switching), and the more recent switching by Wang, Qiu and Hu \cite{WQH2019,QJW2020} (WQH-switching), are operations on graphs that do not change the spectrum of the adjacency matrix. The GM-switching has been extended to signed graphs \cite{Belardo1} and complex unit graphs \cite{Belardo2}, and more recently, also to gain graphs over an arbitrary group \cite{CDS2022}. The WQH-switching has also been extended, in its simpler form, to signed graphs \cite{Belardo1} and complex unit graphs \cite{Belardo2} under the name Modified Godsil-McKay switching. In this paper we show that both results from \cite{Belardo1,Belardo2} hold in a more general form. In particular, we present a new method to construct $G$-gain graphs over an arbitrary group $G$ that are cospectral with respect to the $G$-spectrum, which is independent of the choice of the particular representation of $G$. We will also investigate how this new switching behaves when considering a particular representation of~$G$. Our results provide new insights into the spectral theory of gain graphs with respect to $G$-cospectrality, which to the best of our knowledge, has only been investigated by Cavaleri, Donno and Spessato \cite{CD2022,CDS2022}.

\section{Preliminaries}

We use the notation and the definitions from 
\cite[Section 2]{CDS2022}.

For an abstract group $G$, its neutral element is denoted by $1_G$, except for when $1_G$ coincides with $1$ of the field of complex numbers $\C$, in which case the index $G$ is omitted.

Let $\T$ denote the \emph{group of complex units}, or the group of elements $\{z\in\C : |z|=1\}$. The group $\T_n$ is a group of $n$-th roots of $1\in\C$, so $\T_n=\{z\in C: z^n=1\}$. For example, $\T_2=\{-1,1\}$, or $\T_4=\{i,-1,-i,1\}$.

$M_{n,m}(F)$ denotes a set of matrices of size $n\times m$ with entries in $F$, typically $F\in\{\C, \C G\}$. We assume $M_n(F):=M_{n,n}(F)$. Throughout the paper, the identity matrix of size $n$ is denoted by $I_{M_n(F)}$, while $J_{M_n(F)}$ is a matrix in $M_n(F)$ such that all its entries are $1_G$ in case $F=\C G$ or $1$ in case $F=\C$.

A \emph{gain graph} over a group $G$, also referred to as \emph{$G$-gain graph}, is a pair $(\Gamma,\psi)$, where $\Gamma=(V,A)$ is an underlying directed graph such that any arc $(u,w)\in A$ has a reverse $(w,u)\in A$, and $\psi:A\to G$ is a gain function such that for any arc $(u,w)\in A$ we have $\psi(w,u)=\psi(u,v)^{-1}$. In particular, a $\T_2$-gain graph is a graph with each arc labelled by $1$ or $-1$ (or, alternatively, a sign $+$ or $-$), which is also called a \emph{sign graph}. The concept of gain graphs has been first introduced by Zaslavsky~\cite{z1989}. For a regularly updated bibliography on signed and gain graphs we refer to~\cite{zaslavskybib}.

For a $G$-gain graph $(\Gamma,\psi)$ on $n$ vertices $V=\{v_1,v_2,\dots,v_n\}$ we define its \emph{adjacency matrix} $A_{(\Gamma,\psi)}$ with entries
$$\left(A_{(\Gamma,\psi)}\right)_{i,j}=\begin{cases}
\psi(v_i,v_j), & v_i\sim v_j; \\
0, & \text{otherwise.}
\end{cases}$$
This way, $A_{(\Gamma,\psi)}\in M_n(\C G)$. It follows from the definition that $A^*_{(\Gamma,\psi)}=A_{(\Gamma,\psi)}$.

Similar to how ordinary graphs are usually considered up to isomorphism, gain graphs are typically considered up to switching isomorphism \cite{z1981}. Two gain functions $\psi_1$ and $\psi_2$ on the same underlying graph $\Gamma=(V,A)$ are \emph{switching equivalent} if there exists a function $f:V\to G$ such that for any pair of adjacent vertices $v,w$ we have
$$\psi_2(v,w)=f(v)^{-1}\psi_1(v,w)f(w).$$
Two gain graphs $(\Gamma_1,\psi_1)$ and $(\Gamma_2,\psi_2)$ with vertex sets $V_1$ and $V_2$ respectively are then \emph{switching isomorphic} if there exists a graph isomorphism $\phi:V_1\to V_2$ and the gain function $\psi_1$ is switching equivalent to $\psi_2\circ\phi$ defined by $(\psi_2\circ\phi)(v,w)=\psi_2(\phi(v),\phi(w))$ for any arc $(v,w)$ in $\Gamma_1$.

It is known that the switching equivalence relation can be expressed in terms of adjacency matrices of the gain graphs on $n$ vertices. Namely, $\psi_1$ and $\psi_2$ are switching equivalent if and only if there exists a diagonal matrix $F\in M_n(\C G)$ such that $A_{(\Gamma,\psi_2)}=F^*A_{(\Gamma,\psi_1)}F$ \cite[Theorem 4.1]{CDD2021}. It follows that the gain graphs $(\Gamma_1,\psi_1)$ and $(\Gamma_2,\psi_2)$ are switching isomorphic if and only if there exists a diagonal matrix $F\in M_n(\C G)$ with $F_{i,i}\in G$ for $i\in\{1,\dots,n\}$ and a permutation matrix $P\in M_n(\C G)$ with entries $0$ and $1_G$ such that (see \cite[Remark 2.4]{CDS2022}) $$A_{(\Gamma_2,\psi_2)}=(PF)^*A_{(\Gamma_1,\psi_1)}(PF).$$

In case when $G\subseteq\C$, it is natural to define two $G$-gain graphs to be cospectral if the multisets of eigenvalues of their adjacency matrices (both in $M_n(\C)$) coincide since the characteristic polynomial and the eigenvalues of a complex-valued matrix are well understood. This is the definition used in~\cite{Belardo2,R2012} where $\T_n$-gain graphs were considered. In a general case when $G$ is not necessarily a group of complex numbers, we define $G$-cospectrality through the equality of the sequences of trace powers (the spectral moments) of the two adjacency matrices up to conjugacy.

To be more precise, let $Tr: M_n(\C G)\to \C G$ be a function that maps matrix $A$ to $\sum_{i=1}^n A_{i,i}$. Also, let $[g]$ be a conjugacy class of $g\in G$ and let $\C_{Class}[G]$ be a set of finitely supported \emph{class functions}, \emph{i.e.} functions $f:G\to\C$ such that $f(g_1)=f(g_2)$ whenever $g_1,g_2$ are in the same conjugacy class $[g]$. We consider a natural map $\mu:\C G\to \C_{Class}[G]$ defined by $$\mu\left(\sum\limits_{x\in G} a_x x\right)(g)=\sum\limits_{x\in[g]}a_x.$$

Then two matrices $A,B\in M_n(\C G)$ are \emph{$G$-cospectral} if $\mu(Tr(A^h))=\mu(Tr(B^h))$ for any positive integer $h$, and two $G$-gain graphs $(\Gamma_1,\psi_1)$ and $(\Gamma_2,\psi_2)$ are \emph{$G$-cospectral} if their adjacency matrices are $G$-cospectral.

Note that, similarly to ordinary graphs, for a gain graph $(\Gamma,\psi)$ with vertex set $V=\{v_1,v_2,\dots,v_n\}$ and a positive integer $h$ the entry $(A_{(\Gamma,\psi)}^h)_{i,j}$ corresponds to the sum of gains of all walks of length $h$ from $v_i$ to $v_j$~\cite[Lemma 4.1]{CDD2021}. As such, the definition of $G$-cospectrality is a generalization of the cospectrality notion of gain graphs in the case of complex-valued adjacency matrices.

An instrumental tool in asserting $G$-cospectrality of gain graphs constructed in Section~\ref{sec:Gcosp} is the following lemma.
\begin{lemma}\cite[Lemma 2.6]{CDS2022} Let $(\Gamma_1,\psi_1)$ and $(\Gamma_2,\psi_2)$ be two $G$-gain graphs with adjacency matrices $A_1,A_2\in M_n(\C G)$, respectively. Let $Q,R\in M_n(\C G)$ be such that every entry of $R$ is a complex multiple of $1_G$ and $QR=RQ=I_{M_n(\C G)}$.
If $A_2=QA_1R$ then $(\Gamma_1,\psi_1)$ and $(\Gamma_2,\psi_2)$ are $G$-cospectral.
\end{lemma}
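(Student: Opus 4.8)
\textit{Proof proposal.}
The plan is to verify the defining condition of $G$-cospectrality directly: I want to show $\mu(Tr(A_2^h)) = \mu(Tr(A_1^h))$ for every positive integer $h$. In fact I expect to prove the sharper statement $Tr(A_2^h) = Tr(A_1^h)$ as elements of $\C G$, after which the conclusion is immediate upon applying $\mu$ and invoking the definition of $G$-cospectrality for the gain graphs $(\Gamma_1,\psi_1)$ and $(\Gamma_2,\psi_2)$.

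First I would telescope powers of $A_2$. Writing $A_2 = QA_1R$ and inserting $I_{M_n(\C G)} = RQ$ between consecutive factors gives
$$A_2^h = (QA_1R)(QA_1R)\cdots(QA_1R) = QA_1(RQ)A_1(RQ)\cdots A_1R = QA_1^hR,$$
which uses only $RQ = I_{M_n(\C G)}$ (the hypothesis supplies $QR = I_{M_n(\C G)}$ as well, so there is some slack here). Thus it suffices to establish $Tr(QA_1^hR) = Tr(A_1^h)$. The key point is a restricted cyclicity of $Tr$ that survives the noncommutativity of $\C G$: if $X\in M_n(\C G)$ is arbitrary and $Y\in M_n(\C G)$ has every entry a complex multiple of $1_G$, then $Tr(XY) = Tr(YX)$. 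Indeed $Tr(XY) = \sum_{i,j} X_{ij}Y_{ji}$ and $Tr(YX) = \sum_{i,j} Y_{ij}X_{ji}$; since each $Y_{ij}$ lies in $\C\cdot 1_G$, hence in the center of $\C G$, we have $X_{ij}Y_{ji} = Y_{ji}X_{ij}$, and relabelling the summation indices yields the equality. Applying this with $X = QA_1^h$ and $Y = R$ (whose entries are complex multiples of $1_G$ by hypothesis) gives $Tr(QA_1^hR) = Tr(R\,QA_1^h) = Tr((RQ)A_1^h) = Tr(A_1^h)$, using $RQ = I_{M_n(\C G)}$ once more. Hence $Tr(A_2^h) = Tr(A_1^h)$ for all $h\geq 1$, so certainly $\mu(Tr(A_2^h)) = \mu(Tr(A_1^h))$, and the two gain graphs are $G$-cospectral.

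The argument is short, and the only step that genuinely needs care is the restricted cyclicity of $Tr$: over the group algebra $\C G$ the usual identity $Tr(XY) = Tr(YX)$ is false in general, and it is precisely the assumption that $R$ has central entries (complex multiples of $1_G$) that licenses the cancellation $Tr(QA_1^hR) = Tr((RQ)A_1^h)$. I would also note for emphasis that the proof never needs to pass through characteristic polynomials or eigenvalues — it works entirely at the level of the spectral moments $\mu(Tr(\cdot^h))$ — which is exactly why it is the right lemma to apply in the general-group setting of Section~\ref{sec:Gcosp}.
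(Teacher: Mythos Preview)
Your proof is correct. Note, however, that the paper does not actually supply its own proof of this lemma: it is quoted verbatim from \cite[Lemma~2.6]{CDS2022} and used as a black box. So there is no in-paper argument to compare against. That said, your argument is exactly the natural one and matches the standard proof: telescope $A_2^h = QA_1^hR$ using $RQ=I$, then use that the entries of $R$ lie in $\C\cdot 1_G$ (hence in the center of $\C G$) to recover the cyclic trace identity $Tr(QA_1^hR)=Tr(RQA_1^h)=Tr(A_1^h)$. Your remark that you obtain the stronger equality $Tr(A_2^h)=Tr(A_1^h)$ (before applying $\mu$) is accurate and worth keeping; the only genuinely delicate point, which you correctly isolate, is that trace cyclicity over $M_n(\C G)$ requires one factor to have central entries.
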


A \emph{representation} of a group $G$ of degree $k$ is a group homomorphism $\pi:G\to GL_k(\C)$, where $GL_k(\C)$ is a set of all invertible matrices in $M_k(\C)$. An easy example is for a symmetric group on $k$ elements $S_k$. A homomorphism $\pi:S_k\to GL_k(\C)$ that maps a permutation in $S_k$ to a respective permutation matrix of size $k$ is a representation. Moreover, it is a \emph{unitary representation}, \emph{i.e.} a representation such that $\pi(g)\in U_k(\C)$ for any $g\in G$, where $U_k(\C)=\{M\in GL_k(\C): M^{-1}=M^*\}$ is a set of unitary matrices of size $k$ over $\C$. It is a well-known fact that any finite group $G$ can be isomorphically embedded into a symmetric group $S_|G|$, so a unitary representation always exists for any finite group. Another example is a \emph{trivial representation} $\pi_0:G\to C$ that maps any $g\in G$ to $1$ which is also unitary. In case $G\subseteq \C$ one can also consider an \emph{identical representation} $\pi_{id}:G\to GL_1(C)$ such that $\pi_{id}(g)=g$ for any $g\in G$.

Along with a group homomorphism $\pi: G\to GL_k(\C)$, $\pi$ will also sometimes denote its natural linear extension to $\C G$, which is an algebra homomorphism. Namely,
\begin{align*}
    \pi: \C G &\to M_k(\C) \text{ s.t.} \\
    \sum\limits_{g\in G} a_g g &\mapsto \sum\limits_{g\in G} a_g \pi(g).
\end{align*}
This can be even further extended to $\pi:M_{n,m}(\C G)\to M_{nk,mk}(\C)$, where $A\in M_{n,m}(\C G)$ is mapped to a matrix $\pi(A)$ obtained by replacing each entry $A_{i,j}\in\C G$ with a matrix $\pi(A_{i,j})\in M_k(\C)$.

Let $(\Gamma,\psi)$ be a $G$-gain graph, and let $\pi$ be a unitary representation of $G$. We say $\pi(A_{(\Gamma,\psi)})$ is the \emph{represented adjacency matrix} of $(\Gamma,\psi)$ with respect to $\pi$. According to \cite[Proposition 3.4]{CDD2021}, $\pi(A_{(\Gamma,\psi)})$ is Hermitian since $\pi$ is unitary, and its (real) spectrum is called the \emph{$\pi$-spectrum} of $(\Gamma,\psi)$. Finally, two $G$-gain graphs $(\Gamma_1,\psi_1)$ and $(\Gamma_2,\psi_2)$ are \emph{$\pi$-cospectral} if they have the same $\pi$-spectrum.

It follows from \cite[Theorem 4.14]{CD2022} that, for a finite group $G$, two $G$-gain graphs $(\Gamma_1,\psi_1)$ and $(\Gamma_2,\psi_2)$ are $G$-cospectral if any only if they are $\pi$-cospectral for every unitary representation $\pi$ of $G$. This way, $G$-cospectrality is a more general notion than $\pi$-cospectrality. 


 

\section{A switching to construct $G$-cospectral gain graphs}\label{sec:Gcosp}

The main goal of this section is to show a new method to obtain pairs of $G$-cospectral gain graphs. We are inspired by the
Godsil-McKay switching \cite{gm1982}, the Wang-Qiu-Hu switching \cite{WQH2019,QJW2020} and its generalizations \cite{Belardo1,Belardo2}.

Let $(\Gamma,\psi)$ be a $G$-gain graph on $n$ vertices, and suppose that $$\alpha=\{C_0,C_1,C_2,\dots,C_{2k-1},C_{2k}\}$$ is a partition of the vertex set of $V_\Gamma$. With respect to $\alpha$ for every vertex $v$, we define
$$\Psi_i(v):=\sum\limits_{w\in C_i, w\sim v} \psi(v,w).$$

\begin{definition}\label{d:GWQH}
A partition $\alpha$ is a \emph{$G$-WQH partition} if the following conditions hold:
\begin{itemize}
    \item $|C_0|=n_0$ and $|C_i|=|C_{i+1}|=n_i$ for any odd $i<2k$;
    \item for $i,j\in\{1,\dots,2k\}$ and $v,v'\in C_i$ we have $\Psi_j(v)=\Psi_j(v')$; 
    \item for odd $i,j<2k$ and $v\in C_i$, $v'\in C_{i+1}$ we have $\Psi_j(v)=\Psi_{j+1}(v')$ and \linebreak $\Psi_{j+1}(v)=\Psi_{j}(v')$; 
    \item for every $v\in C_0$ and an odd $i<2k$ we have either
    \begin{enumerate}
        \item[(a)] $\Psi_i(v)=\Psi_{i+1}(v)$, or
        \item[(b)] $\Psi_i(v)=|C_i|g_1$ and $\Psi_{i+1}(v)=|C_{i+1}|g_2$ for some distinct $g_1,g_2\in G\cup\{0\} \subset \C G$.
    \end{enumerate} 
\end{itemize}
\end{definition}

Alternatively, in a $G$-WQH partition, the total gain over all edges from $v\in C_i$ to vertices in $C_j$ does not depend on the choice of $v$. Additionally, for odd $i,j<2k$ the pairs of subsets $C_i\cup C_{i+1}$ and $C_j\cup C_{j+1}$ are subject to the following relation: the total gain over all edges from $v\in C_i$ to vertices in $C_j$ ($C_{j+1}$) must be the same as the total gain over all edges from $v'\in C_{i+1}$ to vertices in $C_{j+1}$ ($C_j$). 
Moreover, for a vertex $v\in C_0$ and an odd $i<2k$, either the total gain summed over all edges to $C_i$ is the same as the total gain summed over all edges to $C_{i+1}$, or such $v$ must be adjacent to all vertices of $C_i$ with the same gain $g_1$ and to all vertices of $C_{i+1}$ with the same gain $g_2$, where either gain may be zero, implying non-adjacency. The requirement for $g_1$ and $g_2$ to be distinct is stated to prevent overlap between the two cases, although throughout the following definitions and proofs no conflict arises if $g_1=g_2$.

\begin{example}\label{ex:GWQH}
Let $(\Gamma,\psi)$ be a $\T_4$-gain graph on $13$ vertices $v_0,v_1,\dots,v_{12}$ with adjacency matrix:
$$A_{(\Gamma,\psi)}=\left(\begin{tabular}{c||ccc|ccc||ccc|ccc}
$0$   & $1$ & $1$ & $1$     & $0$ & $0$ & $0$     & $1$ & $0$ & $0$     & $1$ & $0$ & $0$ \\ \hline\hline
$1$   & $0$ & $0$ & $0$     & $0$ & $0$ & $0$     & $0$ & $0$ & $0$     & $0$ & $0$ & $0$ \\
$1$   & $0$ & $0$ & $0$     & $0$ & $0$ & $0$     & $0$ & $0$ & $0$     & $0$ & $0$ & $0$ \\
$1$   & $0$ & $0$ & $0$     & $0$ & $0$ & $0$     & $0$ & $0$ & $0$     & $0$ & $0$ & $0$ \\ \hline
$0$   & $0$ & $0$ & $0$     & $0$ & $i$ & $-i$    & $0$ & $0$ & $0$     & $0$ & $0$ & $0$ \\
$0$   & $0$ & $0$ & $0$     & $-i$ & $0$ & $i$    & $0$ & $0$ & $0$     & $0$ & $0$ & $0$ \\
$0$   & $0$ & $0$ & $0$     & $i$ & $-i$ & $0$    & $0$ & $0$ & $0$     & $0$ & $0$ & $0$ \\ \hline\hline
$1$   & $0$ & $0$ & $0$     & $0$ & $0$ & $0$     & $0$ & $0$ & $0$     & $0$ & $0$ & $0$ \\
$0$   & $0$ & $0$ & $0$     & $0$ & $0$ & $0$     & $0$ & $0$ & $0$     & $0$ & $0$ & $0$ \\
$0$   & $0$ & $0$ & $0$     & $0$ & $0$ & $0$     & $0$ & $0$ & $0$     & $0$ & $0$ & $0$ \\ \hline
$1$   & $0$ & $0$ & $0$     & $0$ & $0$ & $0$     & $0$ & $0$ & $0$     & $0$ & $i$ & $-i$ \\
$0$   & $0$ & $0$ & $0$     & $0$ & $0$ & $0$     & $0$ & $0$ & $0$     & $-i$ & $0$ & $i$ \\
$0$   & $0$ & $0$ & $0$     & $0$ & $0$ & $0$     & $0$ & $0$ & $0$     & $i$ & $-i$ & $0$ \\
\end{tabular}\right).$$
Then the partition $\alpha$ into subsets
\begin{gather*}
C_0=\{v_0\},\; C_1=\{v_1,v_2,v_3\},\;C_2=\{v_4,v_5,v_6\},\\
C_3=\{v_7,v_8,v_9\},\;C_4=\{v_{10},v_{11},v_{12}\}    
\end{gather*}
is a $G$-WQH partition. Indeed, we have $|C_1|=|C_2|$ and $|C_3|=|C_4|$. Moreover, it is easily verified that for any $i,j\in\{1,2,3,4\}$ and any vertex $w_i\in C_i$ we have $\Psi_j(w_i)=0$, so the second and the third conditions in Definition~\ref{d:GWQH} hold up. Finally, for $v_0\in C_0$ and the subsets $C_3$ and $C_4$ we have $\Psi_3(v_0)=\Psi_4(v_0)=1$, which satisfies $(a)$ of the fourth condition, and for the subsets $C_1$ and $C_2$ we have $\Psi_1(v_0)=3=3\cdot1$ and $\Psi_2(v_0)=0=3\cdot 0$, which satisfies $(b)$ of that condition for $g_1=1\in\T_4$ and $g_2=0$.
\end{example}

\begin{definition}\label{d:alphagraph}
For a $G$-WQH partition $\alpha$ and a $G$-gain graph $(\Gamma,\psi)$, we define a \emph{gain graph} $(\Gamma^\alpha,\psi^\alpha)$ as follows (assuming $\psi(v,w)=0$ iff $v\not\sim w$):
\begin{itemize}
    \item for $v,w\in C_1\cup\cdots\cup C_{2k}$, we have $\psi^\alpha(v,w)=\psi(v,w)$ (that is, the adjacency and the gains between pairs of vertices in $\bigcup\limits_{i=1}^{2k}C_i$ are the same as in $(\Gamma,\psi)$);
    \item for $v\in C_0$ and an odd $i<2k$ such that $\Psi_i(v)=\Psi_{i+1}(v)$ and $w\in C_i\cup C_{i+1}$, we have $\psi^\alpha(v,w)=\psi(v,w)$; 
    \item for $v\in C_0$ and an odd $i<2k$ such that $\Psi_i(v)=|C_i|g_1$ and $\Psi_{i+1}(v)=|C_{i+1}|g_2$ for some $g_1,g_2\in G$ we have $\psi^\alpha(v,w)=g_2$ if $w\in C_i$ and $\psi^\alpha(v,w)=g_1$ if $w\in C_{i+1}$.
\end{itemize}
\end{definition}

The $G$-WQH partition is said to be \emph{nontrivial} if $(\Gamma,\psi)$ and $(\Gamma^\alpha,\psi^\alpha)$ are not switching isomorphic.

Finally, for a $G$-WQH partition $\alpha$, let $Q_\alpha \in M_n(\C G)$ be a block-diagonal matrix $Q_\alpha =\operatorname{diag}\left(I_{M_{n_0}(\C G)}, Q_{n_1},\dots,Q_{n_k}\right)$, where for each $i=1,\dots,k$
$$Q_{n_i}=\left(\begin{matrix}
I_{M_{n_i}(\C G)}-\frac1{n_i}J_{M_{n_i}(\C G)} & \frac1{n_i}J_{M_{n_i}(\C G)} \\
\frac1{n_i}J_{M_{n_i}(\C G)} & I_{M_{n_i}(\C G)}-\frac1{n_i}J_{M_{n_i}(\C G)}
\end{matrix}\right),$$
where recall that $n_i$ is the size of $C_i$ and $C_{i+1}$ for any odd $i<2k$.

Observe that the nonzero entries of $Q_\alpha$ are all real multiples of $1_G$ and $Q_\alpha^*=Q_\alpha$.

\begin{lemma}\label{l:block} Let $A$ be a $2n_i\times 2n_j$ block matrix $$A=\left(\begin{matrix}
A_{1,1} & A_{1,2} \\
A_{2,1} & A_{2,2}
\end{matrix}\right)$$
with each block has size $n_i\times n_j$ and such that the blocks $A_{1,1}$ and $A_{2,2}$ (as well as $A_{1,2}$ and $A_{2,1}$) have constant row sum of the same value and constant column sum of the same value.
Then $Q_{n_i} A Q_{n_j} = A$.
\end{lemma}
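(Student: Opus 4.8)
The plan is to write each block $Q_{n_i}$ as a rank-one correction of the identity, which collapses the whole statement to a short bilinear identity. First I would set $u\in(\C G)^{2n_i}$ to be the column vector whose first $n_i$ entries equal $1_G$ and whose last $n_i$ entries equal $-1_G$, and similarly $v\in(\C G)^{2n_j}$ with first $n_j$ entries $1_G$ and last $n_j$ entries $-1_G$. A one-line check shows $uu^{\top}$ is exactly the $2n_i\times 2n_i$ matrix with diagonal blocks $J_{M_{n_i}(\C G)}$ and off-diagonal blocks $-J_{M_{n_i}(\C G)}$, hence $Q_{n_i}=I_{M_{2n_i}(\C G)}-\tfrac1{n_i}uu^{\top}$ and likewise $Q_{n_j}=I_{M_{2n_j}(\C G)}-\tfrac1{n_j}vv^{\top}$; moreover $u^{\top}u=2n_i\,1_G$ and $v^{\top}v=2n_j\,1_G$. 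Multiplying out and collecting terms, I obtain
\[
Q_{n_i}AQ_{n_j}=A-\tfrac1{n_j}(Av)\,v^{\top}-\tfrac1{n_i}u\,(u^{\top}A)+\tfrac1{n_in_j}u\,(u^{\top}Av)\,v^{\top},
\]
so everything reduces to showing that the last three terms cancel.

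Next I would bring in the hypothesis on the row and column sums. Write $\rho$ for the common value of the (constant) row sums of $A_{1,1}$ and $A_{2,2}$, $\rho'$ for the common row sum of $A_{1,2}$ and $A_{2,1}$, and $\gamma,\gamma'$ for the corresponding common column sums. A direct inspection, using only that the entries of $u$ and $v$ lie in $\{1_G,-1_G\}$, shows that the first $n_i$ entries of $Av$ are all equal to $\rho-\rho'$ and the last $n_i$ all equal to $\rho'-\rho$; in matrix form, $Av=u\,(\rho-\rho')$ (product on the right by the scalar $\rho-\rho'\in\C G$). Dually, inspecting columns gives $u^{\top}A=(\gamma-\gamma')\,v^{\top}$. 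Plugging these in and using associativity together with $u^{\top}u=2n_i\,1_G$ and $v^{\top}v=2n_j\,1_G$, I evaluate $u^{\top}Av$ in two ways, $u^{\top}\bigl(u(\rho-\rho')\bigr)=2n_i(\rho-\rho')$ and $\bigl((\gamma-\gamma')v^{\top}\bigr)v=2n_j(\gamma-\gamma')$, which forces $n_i(\rho-\rho')=n_j(\gamma-\gamma')$, equivalently $\tfrac1{n_j}(\rho-\rho')=\tfrac1{n_i}(\gamma-\gamma')$.

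Finally, substituting $Av=u(\rho-\rho')$, $u^{\top}A=(\gamma-\gamma')v^{\top}$ and $u^{\top}Av=2n_i(\rho-\rho')$ into the expansion above, the three correction terms combine into
\[
u\Bigl(-\tfrac1{n_j}(\rho-\rho')-\tfrac1{n_i}(\gamma-\gamma')+\tfrac2{n_j}(\rho-\rho')\Bigr)v^{\top}=u\Bigl(\tfrac1{n_j}(\rho-\rho')-\tfrac1{n_i}(\gamma-\gamma')\Bigr)v^{\top},
\]
which is $0$ by the relation just obtained; hence $Q_{n_i}AQ_{n_j}=A$.

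I do not expect a genuine obstacle here; the only point requiring real care is bookkeeping in a noncommutative ring. The elements $\rho-\rho'$ and $\gamma-\gamma'$ are \emph{not} central in $\C G$, so one must respect the side on which they appear ($u$ on the left of $\rho-\rho'$, $v^{\top}$ on the right of $\gamma-\gamma'$); but since they always occur sandwiched as $u(\cdot)v^{\top}$, associativity of matrix multiplication makes every manipulation legitimate, while the coefficients $\tfrac1{n_i},\tfrac1{n_j}\in\C$ and the $\pm1_G$ entries of $u,v$ are central and cause no trouble. The other thing to watch is simply matching each block $A_{1,1},A_{1,2},A_{2,1},A_{2,2}$ to the correct one of $\rho,\rho',\gamma,\gamma'$ in the row- and column-sum inspections.
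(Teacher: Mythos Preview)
Your proof is correct. The care you take with noncommutativity is well placed: since the entries of $u$ and $v$ are $\pm 1_G$ (central in $\C G$) and the rational coefficients $\tfrac1{n_i},\tfrac1{n_j}$ are central as well, the only genuinely noncentral scalars $\rho-\rho'$ and $\gamma-\gamma'$ always sit in the same slot of a $u(\cdot)v^{\top}$ sandwich, so all rearrangements are legitimate. The identity $n_i(\rho-\rho')=n_j(\gamma-\gamma')$ that you extract from the two evaluations of $u^{\top}Av$ is, of course, just the double-counting relation between row and column sums, and it is exactly what forces the three correction terms to cancel.

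Your route differs from the paper's. The paper keeps the $2\times 2$ block structure of $Q_{n_i}$ and $A$ explicit, records the identities $J_{M_{n_i}(\C G)}A_{l,m}=c_t J$ and $A_{l,m}J_{M_{n_j}(\C G)}=r_t J$ (for $t\in\{1,2\}$ according to the block), and then multiplies out the $2\times 2$ block products directly; the cancellation happens block by block. You instead collapse each $Q$ to a rank-one perturbation $I-\tfrac1{n}uu^{\top}$, which reduces the whole computation to a single bilinear expansion with three correction terms. Your approach is more structured and yields the row/column-sum compatibility relation as a free byproduct; the paper's approach is more pedestrian but requires no auxiliary vectors and makes the block symmetry visibly explicit. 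Both are short; yours would generalize more readily if $Q$ were replaced by another low-rank perturbation of the identity.
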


\begin{proof}
Suppose $r_1$ and $c_1$ are, respectively, row sum and column sum constants for $A_{1,1}$ and $A_{2,2}$, while $r_2$ and $c_2$ are such constants for $A_{1,2}$ and $A_{2,1}$. Observe that
{\small{
\begin{align*}
  &J_{M_{n_i}(\C G)} A_{l,l} = c_1 J_{M_{n_i,n_j}(\C G)},\quad
  A_{l,l} J_{M_{n_j}(\C G)}= r_1 J_{M_{n_i,n_j}(\C G)} \text{ for }l\in\{1,2\}; \\
  &J_{M_{n_i}(\C G)} A_{l,m} = c_2 J_{M_{n_i,n_j}(\C G)},\quad
  A_{l,m} J_{M_{n_j}(\C G)}= r_2 J_{M_{n_i,n_j}(\C G)} \text{ for }\{l,m\}=\{1,2\}; \\
  &J_{M_{n_i}(\C G)} J_{M_{n_i,n_j}(\C G)} = n_i J_{M_{n_i,n_j}(\C G)},\quad
  J_{M_{n_i,n_j}(\C G)} J_{M_{n_j}(\C G)}= n_j J_{M_{n_i,n_j}(\C G)}. 
\end{align*}
}}
Using the above equalities, it is straightforward to verify that $Q_{n_i} A Q_{n_j} = A$ by performing the block matrix multiplication.
\end{proof}

\begin{theorem}\label{t:gcosp} Let $(\Gamma,\psi)$ be a $G$-gain graph and let $\alpha$ be a $G$-WQH partition with associated matrix $Q_\alpha\in M_n(\C G)$. Then
$$A_{(\Gamma^\alpha,\psi^\alpha)}=Q_\alpha A_{(\Gamma,\psi)} Q_\alpha.$$
In particular, $(\Gamma,\psi)$ and $(\Gamma^\alpha,\psi^\alpha)$ are $G$-cospectral.
\end{theorem}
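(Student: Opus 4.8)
The plan is to verify the matrix identity $A_{(\Gamma^\alpha,\psi^\alpha)}=Q_\alpha A_{(\Gamma,\psi)} Q_\alpha$ block by block, where the blocks are indexed by the parts $C_0,C_1,\dots,C_{2k}$ of the partition $\alpha$; once this is done, $G$-cospectrality follows immediately from Lemma~2.1 (the \cite{CDS2022} lemma), since $Q_\alpha$ has entries that are complex multiples of $1_G$, is self-adjoint, and is its own inverse (this last point needs a one-line check: each $Q_{n_i}^2=I$ because $\left(I-\frac1{n_i}J\right)^2+\frac1{n_i^2}J^2=I-\frac2{n_i}J+\frac1{n_i^2}(n_iJ)+\frac1{n_i}J = I$ and the off-diagonal blocks of $Q_{n_i}^2$ vanish similarly). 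So I would state at the outset that $Q_\alpha Q_\alpha = I_{M_n(\C G)}$ and then reduce everything to checking the conjugation block-wise.

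Next I would organize the block computation by cases according to which parts the two index-blocks come from. Write $A=A_{(\Gamma,\psi)}$ and $B=Q_\alpha A Q_\alpha$, and compare $B$ with $A^\alpha := A_{(\Gamma^\alpha,\psi^\alpha)}$. Case 1: both blocks among $C_1,\dots,C_{2k}$. For a pair of "twin" super-blocks, i.e.\ rows in $C_i\cup C_{i+1}$ and columns in $C_j\cup C_{j+1}$ with $i,j$ odd, the $2n_i\times 2n_j$ submatrix of $A$ has the structure required by Lemma~2.2: the diagonal sub-blocks $A_{C_i,C_j}$ and $A_{C_{i+1},C_{j+1}}$ share a common row sum (namely $\Psi_j$ evaluated on any vertex of $C_i$, which equals $\Psi_{j+1}$ on any vertex of $C_{i+1}$ by the third condition of Definition~2.3 — wait, one must be careful: the entries live in $\C G$, so "constant row sum" means the sum $\sum_{w\in C_j}\psi(v,w)=\Psi_j(v)$ is the same element of $\C G$ for all $v\in C_i$, which is exactly the second condition of Definition~2.3) and a common column sum; and the anti-diagonal sub-blocks $A_{C_i,C_{j+1}}$, $A_{C_{i+1},C_j}$ likewise share common row/column sums, again by the swap conditions. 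Here I should remark that Lemma~2.2 was stated over a field but the proof only uses $J$-multiplication identities that hold verbatim over $\C G$, so it applies; alternatively I would restate/reprove it in the $\C G$ setting. By Lemma~2.2 the corresponding block of $B$ equals that of $A$, and by the first bullet of Definition~2.4 that also equals the block of $A^\alpha$. The diagonal of $Q_\alpha$ being $I$ on $C_0$ handles any block with one index in $C_0$ and the other in $C_1,\dots,C_{2k}$ by the same Lemma~2.2 argument applied to $Q_{n_j}$ acting on one side only (with $I$ on the other), using the second condition of Definition~2.3.

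Case 2 is the genuinely new part: a $1\times$ (or $n_0\times$) block with rows in $C_0$ and columns in $C_i\cup C_{i+1}$, $i$ odd (and its transpose). Fix $v\in C_0$. If $\Psi_i(v)=\Psi_{i+1}(v)$ (subcase (a)), the length-$n_i$ row vectors $a:=(\psi(v,w))_{w\in C_i}$ and $b:=(\psi(v,w))_{w\in C_{i+1}}$ have equal coordinate sums, so $(a\ b)\begin{pmatrix}I-\frac1{n_i}J & \frac1{n_i}J\\ \frac1{n_i}J & I-\frac1{n_i}J\end{pmatrix} = (a\ b)$ — compute: the new $C_i$-block is $a - \frac1{n_i}(\mathbf 1\cdot\Psi_i(v)) + \frac1{n_i}(\mathbf 1\cdot\Psi_{i+1}(v)) = a$, and symmetrically for $C_{i+1}$ — matching the second bullet of Definition~2.4. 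If instead (subcase (b)) $\Psi_i(v)=n_i g_1$ and $\Psi_{i+1}(v)=n_i g_2$ with $g_1,g_2\in G\cup\{0\}$, then since every vertex of $C_i$ is joined to $v$ with gain $g_1$ and every vertex of $C_{i+1}$ with gain $g_2$, we have $a=g_1\mathbf 1$, $b=g_2\mathbf 1$, and the computation gives new $C_i$-block $g_1\mathbf 1 - g_1\mathbf 1 + g_2\mathbf 1 = g_2\mathbf 1$ and new $C_{i+1}$-block $g_2\mathbf 1 - g_2\mathbf 1 + g_1\mathbf 1 = g_1\mathbf 1$, exactly the third bullet of Definition~2.4. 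The $C_0\times C_0$ block is untouched since $Q_\alpha$ is $I$ there. Collecting all cases gives $B=A^\alpha$.

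I expect the main obstacle to be bookkeeping rather than depth: one must be scrupulous that "row sum" / "column sum" are interpreted as equalities of elements of the noncommutative ring $\C G$ (so that one cannot reorder factors), confirm that Lemma~2.2's proof transfers to $\C G$-entries (it does, since it only multiplies by all-$1_G$ matrices $J$, which behave two-sidedly as scalars here), and keep the symmetry $A^*=A$ consistent when reading off the transposed blocks (rows in $C_i$, columns in $C_0$) — these follow by applying the same identities on the other side and invoking $Q_\alpha^*=Q_\alpha$. Once the identity is established, the final sentence is a direct citation of Lemma~2.1 with $Q=R=Q_\alpha$.
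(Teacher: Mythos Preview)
Your proof is correct and follows essentially the same route as the paper: block-by-block verification that $Q_\alpha A Q_\alpha$ agrees with $A^\alpha$, using Lemma~\ref{l:block} for the $(C_i\cup C_{i+1})\times(C_j\cup C_{j+1})$ super-blocks and an explicit row computation for the $C_0$ rows, then invoking the cospectrality lemma with $Q=R=Q_\alpha$. One sentence to clean up: at the end of your Case~1 you say the $C_0\times(C_j\cup C_{j+1})$ blocks are ``handled by the same Lemma~2.2 argument \dots\ using the second condition of Definition~2.3,'' but that condition concerns only $i,j\in\{1,\dots,2k\}$ and these blocks are \emph{not} invariant (they are precisely where the switching occurs); just delete that sentence, since your Case~2 already treats these blocks correctly.
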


\begin{proof} With a suitable labeling of vertices of $\Gamma$ we can write
$$A_{(\Gamma,\psi)}=\left(\begin{matrix}
C_{0,0} & C_{0,1} & \dots & C_{0,k} \\
C_{0,1}^* & C_{1,1} & \dots & C_{1,k} \\
\vdots & \cdots & \ddots & \vdots \\
C_{0,k}^* & C_{1,k}^* & \cdots & C_{k,k}
\end{matrix}\right).$$
Here, for all $i,j\in\{1,\dots,k\}$ the block $C_{i,j}\in M_{2n_{2i-1},2n_{2j-1}}(\C G)$ describes the adjacencies and gains from vertices in $C_{2i-1}\cup C_{2i}$ to vertices in $C_{2j-1}\cup C_{2j}$. The block $C_{0,j}\in M_{n_0,2n_{2j-1}}(\C G)$ describes the adjacencies and gains from vertices in $C_0$ to vertices in $C_{2j-1}\cup C_{2j}$. Later we will also use a split of this block $C_{0,j}=\left( C_{0,j}^{(1)}, \; C_{0,j}^{(2)} \right)$, where $C_{0,j}^{(1)}$ describes adjacencies between the vertices of $C_0$ and $C_{2j-1}$, while $C_{0,j}^{(2)}$ does so for $C_0$ and $C_{2j}$. Finally, the block $C_{0,0}\in M_{n_0}(\C G)$ describes adjacencies and gains within the vertex subset $C_0$.

The following is true for $A_{(\Gamma,\psi)}$ by the definition of $G$-WQH partition (Definition \ref{d:GWQH}):
\begin{itemize}
    \item for $i\in\{0,\dots, k\}$ we have $C_{i,i}^*=C_{i,i}$;
    \item for $i,j\in\{0,\dots,k\}$ the matrix $C_{i,j}$ satisfies the conditions of Lemma~\ref{l:block}.
\end{itemize}

By block matrix multiplication and from Lemma~\ref{l:block} we have: 
$$Q_{\alpha} A_{(\Gamma,\psi)} Q_{\alpha}=\left(\begin{matrix}
C_{0,0} & C_{0,1}Q_{n_1} & \dots & C_{0,k}Q_{n_k} \\
Q_{n_1}C_{0,1}^* & C_{1,1} & \dots & C_{1,k} \\
\vdots & \cdots & \ddots & \vdots \\
Q_{n_k}C_{0,k}^* & C_{1,k}^* & \cdots & C_{k,k}
\end{matrix}\right).$$

Let $v\in C_0$ and $j\in\{1,\dots,k\}$, and let ${\bf x}$ be the row of $C_{0,j}$ which corresponds to $v$ (the following observations are just as valid for the column ${\bf x}^*$ of $C_{0,j}^*$ corresponding to $v$).
With the use of the split $C_{0,j}=\left( C_{0,j}^{(1)}, \; C_{0,j}^{(2)} \right)$ we can write down (assuming $J:=J_{M_{n_j}(\C G)}$)
$$C_{0,j}=\left( C_{0,j}^{(1)}-\frac1{n_j} C_{0,j}^{(1)} J + \frac1{n_j} C_{0,j}^{(2)} J, \quad \frac1{n_j} C_{0,j}^{(1)} J + C_{0,j}^{(2)} - \frac1{n_j} C_{0,j}^{(2)} J \right).$$

There are two possible cases.
First, if $\Psi_{2j-1}(v)=\Psi_{2j}(v)$, then the row corresponding to $v$ in $A_{(\Gamma^\alpha,\psi^\alpha)}$ is also ${\bf x}$ as the contribution of $\frac1{n_j} C_{0,j}^{(1)}J$ and $\frac1{n_j} C_{0,j}^{(2)}J$ terms amounts to zero in this case.
On the other hand, suppose $\Psi_{2j-1}(v)=n_{2j-1} g_1$ and $\Psi_{2j}(v)=n_{2j-1} g_2$ for some $g_1,g_2\in G\cup\{0\}$, or, in other words, ${\bf x}$ takes form $(g_1,\dots,g_1,g_2,\dots,g_2)$, a row of $n_{2j-1}$ entries $g_1$ followed by $n_{2j-1}$ entries $g_2$. Then the row of $C_{0,j}Q_{n_j}$ corresponding to the vertex $v$ takes form $(g_2,\dots,g_2,g_1,\dots,g_1)$. Either case is consistent with the described construction of the graph $(\Gamma^\alpha, \psi^\alpha)$, and so the matrix $Q_{\alpha} A_{(\Gamma,\psi)} Q_{\alpha}$ is indeed equal to the adjacency matrix $A_{(\Gamma^\alpha,\psi^\alpha)}$.
\end{proof}

\begin{example}
Let $(\Gamma,\psi)$ be a $\T_4$-gain graph on $13$ vertices with the partition $\alpha$ as in Example~\ref{ex:GWQH}. It was already shown that $\alpha$ is a $G$-WQH partition. Moreover, the partition is nontrivial. Indeed, the graph $(\Gamma^\alpha,\psi^\alpha)$ is obtained from $(\Gamma,\psi)$ by removing edges $\{v_0,v_i\}$, $i\in\{1,2,3\}$ and adding new edges $\{v_0,v_i\}$, $i\in\{4,5,6\}$ all with gain $1$. This graph is disconnected unlike $(\Gamma,\alpha)$, and hence cannot be switching isomorphic to it. It also cannot be constructed using the GM switching for $G$-cospectral graphs from~\cite{CDS2022} since there is no suitable $G$-GM partition of $V_\Gamma$.
\end{example}

\section{A switching with respect to a representation: \linebreak $\pi$-cospectral gain graphs}

In this section, we present a method to obtain pairs of $\pi$-cospectral gain graphs for some unitary representation $\pi$ of the group $G$.

Let $(\Gamma,\psi)$ be a $G$-gain graph on $n$ vertices, and let $\pi$ be a unitary representation $\pi:G\to U_k(\C)$ ($\pi$ also stands for a linear extension $\pi:\C G\to M_k(\C)$). Recall the function
$$\Psi_i(v):=\sum\limits_{w\in C_i, w\sim v} \psi(v,w)$$
defined with respect to a partition $\alpha=\{C_0, C_1,\dots,C_{2k}\}$. In this section, we will also use a function $\Psi_{i,i+1}(v)=\Psi_i(v)+\Psi_{i+1}(v)$ for an odd $i<2k$.

\begin{remark}
If $\pi_0$ is the trivial representation mapping each element of $G$ to $1$, then
$\pi_0(\psi_i(v))$ is the number of vertices of $C_i$ which are adjacent to $v$.
\end{remark}

\begin{definition}\label{d:piWQH}
A partition $\alpha=\{C_0,C_1,\dots,C_{2k}\}$ of the vertex set of $\Gamma$ is a \emph{$\pi$-WQH partition} if the following conditions hold:
\begin{itemize}
    \item $|C_0|=n_0$ and $|C_i|=|C_{i+1}|=n_i$ for an odd $i<2k$;
    \item for $i,j\in\{1,\dots,2k\}$ and $v,v'\in C_i$ we have $\pi(\Psi_j(v))=\pi(\Psi_j(v'))$;
    \item for odd $i,j<2k$ and $v\in C_i$, $v'\in C_{i+1}$ we have $\pi(\Psi_j(v))=\pi(\Psi_{j+1}(v'))$ and $\pi(\Psi_{j+1}(v))=\pi(\Psi_{j}(v'))$;
    \item for every $v\in C_0$ and an odd $i<2k$ we have either
    \begin{enumerate}
        \item[(a)] $\Psi_i(v)=\Psi_{i+1}(v)$, or
        \item[(b)] $\Psi_i(v)=|C_i|g_1$ and $\Psi_{i+1}(v)=|C_{i+1}|g_2$ for some distinct $g_1,g_2\in G\cup\{0\}$.
    \end{enumerate}
\end{itemize}
\end{definition}

The above definition is closely related to Definition~\ref{d:GWQH}. For a $\pi$-WQH partition $\alpha$, the graph $(\Gamma^\alpha,\psi^\alpha)$ is constructed exactly as described in Definition~\ref{d:alphagraph}.

\begin{example}\label{ex:piWQH}
This example builds on~\cite[Example 4.3]{CDS2022}. Consider an $S_4$-gain graph $(\Gamma,\psi)$ depicted in the left of Figure~\ref{fig:picosp}; the $\psi$-image of any unlabeled edge is $1_{S_4}$. Let $\alpha$ be a partition $\{C_0,C_1,C_2,C_3,C_4\}$, where $C_0=\{v_1\}$, $C_1=\{v_2,v_3,v_4,v_5\}$, $C_2=\{v_6,v_7,v_8,v_9\}$, $C_3=\{v_{10},v_{11},v_{12},v_{13}\}$, and $C_4=\{v_{14},v_{15},v_{16},v_{17}\}$. It is not a $G$-WQH partition, since $\Psi_1(v)=1_{S_4}+(12)(34)\neq (12)+(34)=\Psi_2(v')$ for $v\in C_1$ and $v'\in C_2$. However, by choosing a unitary representation $\pi$ that sends a permutation from $S_4$ to a respective $4\times 4$ permutation matrix, we obtain
\begin{align*}
\pi(1_{S_4})&=\left(\begin{matrix}
1 & 0 & 0 & 0 \\
0 & 1 & 0 & 0 \\
0 & 0 & 1 & 0 \\
0 & 0 & 0 & 1
\end{matrix}\right),\; \pi((12)(34))=\left(\begin{matrix}
0 & 1 & 0 & 0 \\
1 & 0 & 0 & 0 \\
0 & 0 & 0 & 1 \\
0 & 0 & 1 & 0
\end{matrix}\right), \\
\pi((12))&=\left(\begin{matrix}
0 & 1 & 0 & 0 \\
1 & 0 & 0 & 0 \\
0 & 0 & 1 & 0 \\
0 & 0 & 0 & 1
\end{matrix}\right),\; \pi((34))=\left(\begin{matrix}
1 & 0 & 0 & 0 \\
0 & 1 & 0 & 0 \\
0 & 0 & 0 & 1 \\
0 & 0 & 1 & 0
\end{matrix}\right).
\end{align*}
and so $\pi(\Psi_1(v))=\pi(\Psi_2(v'))$. Similarly, for $v\in C_3$ and $v'\in C_4$ we have $\pi(\Psi_3(v))=\pi(\Psi_4(v'))$. As for the other conditions, we have $|C_1|=|C_2|$ and $|C_3|=|C_4|$, as well as $\Psi_j(v_i)=0$ for any distinct $i,j\in\{1,2,3,4\}$, so the first three conditions in Definition~\ref{d:piWQH} are satisfied. For the final condition, we have $\Psi_1(v_1)=4\times 1_{S_4}$ and $\Psi_2(v_1)=4\times 0$ which satisfies the $(a)$ part, and $\Psi_3(v_1)=\Psi_4(v_1)=1_{S_4}$ satisfying part $(b)$.
\end{example}

\begin{theorem}\label{t:picosp} Let $(\Gamma,\psi)$ be a $G$-gain graph, let $\pi$ be a unitary representation of $G$, and let $\alpha$ be a $\pi$-WQH partition. Then
$$\pi(A_{(\Gamma^\alpha,\psi^\alpha)})=\pi(Q_\alpha)\pi(A_{(\Gamma,\psi)})\pi(Q_\alpha),$$
and in particular, $(\Gamma,\psi)$ and $(\Gamma^\alpha,\psi^\alpha)$ are $\pi$-cospectral.
\end{theorem}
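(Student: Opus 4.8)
The plan is to reduce Theorem~\ref{t:picosp} to Theorem~\ref{t:gcosp} by applying the algebra homomorphism $\pi$, rather than redoing the block-matrix bookkeeping from scratch. First I would observe that $\pi$ applied to $Q_\alpha$ yields a matrix $\pi(Q_\alpha)\in M_{nk}(\C)$ that is again block-diagonal, with each $Q_{n_i}$ turning into the analogous idempotent-type block where every $\frac1{n_i}J_{M_{n_i}(\C G)}$ is replaced by $\frac1{n_i}$ times the all-ones-block-of-$\pi(1_G)$; since $\pi(1_G)=I_k$, this is just the ordinary Godsil--McKay/WQH matrix in $M_{nk}(\C)$, so $\pi(Q_\alpha)=\pi(Q_\alpha)^*$ and $\pi(Q_\alpha)$ is unitary. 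Because $\pi$ is an algebra homomorphism on matrices (it respects products and sums of block matrices over $\C G$), we have $\pi(Q_\alpha A_{(\Gamma,\psi)} Q_\alpha)=\pi(Q_\alpha)\pi(A_{(\Gamma,\psi)})\pi(Q_\alpha)$; so it suffices to show $A_{(\Gamma^\alpha,\psi^\alpha)}=Q_\alpha A_{(\Gamma,\psi)}Q_\alpha$ \emph{after applying $\pi$}, even though this identity need not hold in $M_n(\C G)$ when $\alpha$ is only a $\pi$-WQH partition.

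The key step is therefore to re-run the proof of Theorem~\ref{t:gcosp} keeping track of where the $G$-WQH hypotheses were actually used, and checking that the $\pi$-image of each such use only needs the weaker $\pi$-WQH hypotheses. Concretely: the reduction to Lemma~\ref{l:block} used that each block $C_{i,j}$ has constant row sums and constant column sums as a matrix over $\C G$; under a $\pi$-WQH partition we instead get that $\pi(C_{i,j})$ has constant row/column sums (blockwise, in $M_k(\C)$), and the pairing conditions for odd $i,j$ hold after applying $\pi$. Lemma~\ref{l:block} is a purely formal matrix identity, so its $\pi$-image $\pi(Q_{n_i})\pi(A)\pi(Q_{n_j})=\pi(A)$ holds whenever $\pi(A)$ satisfies the constant-sum conditions; hence $\pi(Q_\alpha)\pi(A_{(\Gamma,\psi)})\pi(Q_\alpha)$ agrees with $\pi(A_{(\Gamma,\psi)})$ in all blocks not touching $C_0$, exactly as before. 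For the blocks involving $C_0$, the fourth condition of Definition~\ref{d:piWQH} is \emph{identical} to that of Definition~\ref{d:GWQH} (it is stated over $G$, not over $\pi$), so the two-case analysis on a row $\mathbf{x}$ of $C_{0,j}$ — either $\Psi_{2j-1}(v)=\Psi_{2j}(v)$, in which case the $J$-corrections cancel, or $\mathbf x=(g_1,\dots,g_1,g_2,\dots,g_2)$, in which case right-multiplication by $Q_{n_j}$ swaps the two halves — goes through verbatim, and its $\pi$-image matches the row of $\pi(A_{(\Gamma^\alpha,\psi^\alpha)})$ by the very construction of $(\Gamma^\alpha,\psi^\alpha)$ in Definition~\ref{d:alphagraph}.

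Having established $\pi(A_{(\Gamma^\alpha,\psi^\alpha)})=\pi(Q_\alpha)\pi(A_{(\Gamma,\psi)})\pi(Q_\alpha)$, I would finish by noting that $\pi(Q_\alpha)$ is unitary (equivalently: real symmetric and equal to its own inverse, which follows from $Q_\alpha^2$ being computable blockwise and each $Q_{n_i}^2=I$ after applying $\pi$), so the right-hand side is a unitary conjugation of the Hermitian matrix $\pi(A_{(\Gamma,\psi)})$; therefore $\pi(A_{(\Gamma^\alpha,\psi^\alpha)})$ and $\pi(A_{(\Gamma,\psi)})$ have the same spectrum, i.e.\ $(\Gamma,\psi)$ and $(\Gamma^\alpha,\psi^\alpha)$ are $\pi$-cospectral. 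The main obstacle I anticipate is purely expository: making precise that ``$\pi$ applied to a block matrix product equals the product of the $\pi$-images'' at the level of $M_{n}(\C G)\to M_{nk}(\C)$, and that Lemma~\ref{l:block}'s constant-row/column-sum hypotheses are the right things to track — once that is set up, every computation is the $\pi$-image of one already done in the proof of Theorem~\ref{t:gcosp}, so no genuinely new calculation is required.
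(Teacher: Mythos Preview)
Your proposal is correct, and for the blocks involving $C_0$ it coincides with what the paper does (the paper also dispatches those rows/columns by pointing back to the argument of Theorem~\ref{t:gcosp}, exactly as you propose). The genuine divergence is in the handling of the blocks $C_{i,j}$ with $i,j\ge 1$. You invoke Lemma~\ref{l:block} \emph{after} applying $\pi$: you view $\pi(C_{i,j})$ as a $2n_i\times 2n_j$ matrix over the ring $M_k(\C)$, observe that the $\pi$-WQH hypotheses give precisely the constant (block-)row and (block-)column sums needed, and conclude $\pi(Q_{n_i})\pi(C_{i,j})\pi(Q_{n_j})=\pi(C_{i,j})$ directly. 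The paper instead stays in $M_n(\C G)$: it builds an explicit correction matrix $D$ (supported on the first row and column of each sub-block) so that $A'+D$ has genuinely constant row and column sums over $\C G$, applies Lemma~\ref{l:block} to $A'+D$ there, and only then uses the $\pi$-WQH hypotheses to argue $\pi(D)=0$, whence $\pi(Q_{n_i}A'Q_{n_j})=\pi(A'+D)=\pi(A')$. Your route is shorter and more conceptual, but it does rely on the observation (left implicit in your write-up) that Lemma~\ref{l:block} is valid over any ring in which the entries of $J$ are central---here $M_k(\C)$ with $J$-entries equal to $I_k$---so you should make that explicit. The paper's correction-matrix trick avoids this caveat at the cost of a longer computation, and has the mild bonus of exhibiting concretely, via $D$, the obstruction to $\alpha$ being a $G$-WQH partition.
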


\begin{proof}
Observe that (the extension to $\C G$ of) $\pi$ is a homomorphism,  so \linebreak $\pi(Q_\alpha)\pi(A_{(\Gamma,\psi)})\pi(Q_\alpha)=\pi(Q_\alpha A_{(\Gamma,\psi)} Q_\alpha)$. Also, by the argument similar to one presented in the proof of Theorem~\ref{t:gcosp}, the first $n_0$ rows and columns of matrix $Q_\alpha A_{(\Gamma,\psi)} Q_\alpha$ relate to the adjacencies between vertices from $C_0$ and $C_i$ for $i\in\{0,1,\dots,2k\}$ in $(\Gamma^\alpha, \psi^\alpha)$. Hence all that is left to prove is that the $\pi$-image of $(n-n_0)\times (n-n_0)$ principal submatrix of $Q_\alpha A_{(\Gamma,\psi)} Q_\alpha$ obtained by removing first $n_0$ rows and columns is the same as the $\pi$-image of a similarly constructed submatrix of $A_{(\Gamma^\alpha, \psi^\alpha)}$. We will do so by considering individual blocks of this matrix.

We fix a pair of odd (not necessarily distinct) integers $i,j<2k$ and consider pairs of subsets $C_i\cup C_{i+1}$ and $C_j\cup C_{j+1}$ with vertices $v_1,\dots, v_{2n_i}$ and $w_1,\dots, w_{2n_j}$ respectively. Let $A'$ denote a $2n_i\times 2n_j$ principal submatrix of $A_{(\Gamma,\psi)}$ obtained by taking rows corresponding to vertices $v_1,\dots v_{2n_i}$ and columns corresponding to vertices $w_1,\dots, w_{2n_j}$. The labeling of the vertices is such that an element $A'_{i,j}$ is $\psi(v_i,w_j)$ if $v_i$ and $w_j$ are adjacent or $A'_{i,j}=0$ otherwise. In this case, after the conjugation $Q_\alpha A_{(\Gamma,\psi)} Q_\alpha$ this block takes form $Q_{n_i} A' Q_{n_j}$.

The matrix $A'$ can be represented as a block matrix
$$A=\left(\begin{matrix}
A_{11} & A_{12} \\
A_{21} & A_{22}
\end{matrix}\right).$$
Let $S_1$ be the sum of all elements in block $A_{11}$ (same as the sum of elements in block $A_{22}$), and $S_2$ be the sum of all elements in block $A_{12}$ ($A_{21}$). Let $D$ be a matrix of four $n_i\times n_j$ blocks
$$D=\left(\begin{matrix}
D_{11} & D_{12} \\
D_{21} & D_{22}
\end{matrix}\right),$$
where
{\footnotesize{
\begin{align*}
D_{11}&=\left(\begin{matrix}
\frac{S_1}{n_i}+\frac{S_1}{n_j}-\Psi_{i}(v_1)-\Psi_{j}^*(w_1) & \frac{S_1}{n_j}-\Psi_{j}^*(w_2) & \dots & \frac{S_1}{n_j}-\Psi_{j}^*(w_{n_j}) \\
\frac{S_1}{n_i}-\Psi_{i}(v_2) & 0 & \dots & 0 \\
\vdots & \dots & \dots & \dots \\
\frac{S_1}{n_i}-\Psi_{i}(v_{n_i}) & 0 & \dots & 0
\end{matrix}\right), \\
D_{12}&=\left(\begin{matrix}
\frac{S_2}{n_i}+\frac{S_2}{n_j}-\Psi_{i+1}(v_1)-\Psi_{j}^*(w_{n_j+1}) & \frac{S_2}{n_j}-\Psi_{j}^*(w_{n_j+2}) & \dots & \frac{S_2}{n_j}-\Psi_{j}^*(w_{2n_j}) \\
\frac{S_2}{n_i}-\Psi_{i+1}(v_2) & 0 & \dots & 0 \\
\vdots & \dots & \dots & \dots \\
\frac{S_2}{n_i}-\Psi_{i+1}(v_{n_i}) & 0 & \dots & 0
\end{matrix}\right), 
\\
D_{21}&=\left(\begin{matrix}
\frac{S_2}{n_i}+\frac{S_2}{n_j}-\Psi_{i}(v_{n_i+1})-\Psi_{j+1}^*(w_1) & \frac{S_2}{n_j}-\Psi_{j+1}^*(w_2) & \dots & \frac{S_2}{n_j}-\Psi_{j+1}^*(w_{n_j}) \\
\frac{S_2}{n_i}-\Psi_{i}(v_{n_i+2}) & 0 & \dots & 0 \\
\vdots & \dots & \dots & \dots \\
\frac{S_2}{n_i}-\Psi_{i}(v_{2n_i}) & 0 & \dots & 0
\end{matrix}\right),
\end{align*}
\begin{align*}
D_{22}&=\left(\begin{matrix}
\frac{S_1}{n_i}+\frac{S_1}{n_j}-\Psi_{i+1}(v_{n_i+1})-\Psi_{j+1}^*(w_{n_j+1}) & \frac{S_1}{n_j}-\Psi_{j+1}^*(w_{n_j+2}) & \dots & \frac{S_1}{n_j}-\Psi_{j+1}^*(w_{2n_j}) \\
\frac{S_1}{n_i}-\Psi_{i+1}(v_{n_i+2}) & 0 & \dots & 0 \\
\vdots & \dots & \dots & \dots \\
\frac{S_1}{n_i}-\Psi_{i+1}(v_{2n_i}) & 0 & \dots & 0
\end{matrix}\right). 
\end{align*}
}}
Observe that $A'+D$ has a constant row and column sum of the same value, as well as its two principal submatrices obtained by removing first (last) $n_i$ rows and $n_j$ columns.
Indeed, using the fact that $\Psi_{i,i+1}(v)$ is the sum of the row of $A'$ that corresponds to the vertex $v$ and $\Psi_{j,j+1}^*(w)$ is the sum of the respective column, it is trivial to confirm that the sum of any row or column of $A'+D$ is equal to $\frac{S_1+S_2}{n_i}$ or $\frac{S_1+S_2}{n_j}$ respectively, while the four blocks have row sum $\frac{S_t}{n_i}$ and column sum $\frac{S_t}{n_j}$ where $t\in\{1,2\}$ depending on the block (analogous to the proof of \cite[Theorem 4.4]{CDS2022}). This means we can apply Lemma~\ref{l:block} to show that $Q_{n_i} (A'+D) Q_{n_j} = A'+D$.

In addition, we have $\pi(D)=0$. This follows from the following equalities that hold for any $v\in C_i$, $v'\in C_{i+1}$, $w\in C_j$, and $w'\in C_{j+1}$ due to the definition of $\pi$-WQH partition:
\begin{gather*}
    \pi(S_1)=\pi(n_i \Psi_{i}(v))=\pi(n_j\Psi_j^*(w))=\pi(n_i\Psi_{i+1}(v'))=\pi(n_j\Psi_{j+1}^*(w')), \\
    \pi(S_2)=\pi(n_i \Psi_{i+1}(v))=\pi(n_j\Psi_j^*(w'))=\pi(n_i\Psi_{i}(v'))=\pi(n_j\Psi_{j+1}^*(w)).
\end{gather*}
From this it is easily derived that the $\pi$-image in any element of $D$ is $0$.

By combining $Q_{n_i} (A'+D) Q_{n_j} = A'+D$ and $\pi(D)=0$, we obtain
\begin{align*}
    \pi(Q_{n_i} A' Q_{n_j}) &= \pi(Q_{n_i} (A'+D-D) Q_{n_j})\\
    &=\pi(Q_{n_i} (A'+D) Q_{n_j})-\pi(Q_{n_i})\cdot0\cdot\pi(Q_{n_j}) \\
    &= \pi(A'+D)=\pi(A')+0=\pi(A').
\qedhere\end{align*}
\end{proof}

\begin{figure}[ht]
    \centering    
    \begin{tikzpicture}[x=0.75pt,y=0.75pt,yscale=-1,xscale=1]

\draw [color={rgb, 255:red, 74; green, 144; blue, 226 }  ,draw opacity=1 ]   (42,70) .. controls (16,146) and (121,172) .. (171,82) ;
\draw [color={rgb, 255:red, 208; green, 2; blue, 27 }  ,draw opacity=1 ]   (481.27,82) .. controls (542.27,172) and (615.27,156) .. (606.27,70) ;
\draw [color={rgb, 255:red, 208; green, 2; blue, 27 }  ,draw opacity=1 ]   (481.27,82) .. controls (501.27,106) and (537.27,114) .. (567.27,110) ;
\draw [color={rgb, 255:red, 208; green, 2; blue, 27 }  ,draw opacity=1 ]   (481.27,82) .. controls (496.27,54) and (530.27,30) .. (567.27,30) ;
\draw    (81,30) -- (120,70) ;
\draw [shift={(120,70)}, rotate = 45.73] [color={rgb, 255:red, 0; green, 0; blue, 0 }  ][fill={rgb, 255:red, 0; green, 0; blue, 0 }  ][line width=0.75]      (0, 0) circle [x radius= 3.35, y radius= 3.35]   ;
\draw [shift={(81,30)}, rotate = 45.73] [color={rgb, 255:red, 0; green, 0; blue, 0 }  ][fill={rgb, 255:red, 0; green, 0; blue, 0 }  ][line width=0.75]      (0, 0) circle [x radius= 3.35, y radius= 3.35]   ;
\draw    (42,70) -- (81,110) ;
\draw [shift={(81,110)}, rotate = 45.73] [color={rgb, 255:red, 0; green, 0; blue, 0 }  ][fill={rgb, 255:red, 0; green, 0; blue, 0 }  ][line width=0.75]      (0, 0) circle [x radius= 3.35, y radius= 3.35]   ;
\draw [shift={(42,70)}, rotate = 45.73] [color={rgb, 255:red, 0; green, 0; blue, 0 }  ][fill={rgb, 255:red, 0; green, 0; blue, 0 }  ][line width=0.75]      (0, 0) circle [x radius= 3.35, y radius= 3.35]   ;
\draw    (257,30) -- (296,70) ;
\draw [shift={(296,70)}, rotate = 45.73] [color={rgb, 255:red, 0; green, 0; blue, 0 }  ][fill={rgb, 255:red, 0; green, 0; blue, 0 }  ][line width=0.75]      (0, 0) circle [x radius= 3.35, y radius= 3.35]   ;
\draw [shift={(257,30)}, rotate = 45.73] [color={rgb, 255:red, 0; green, 0; blue, 0 }  ][fill={rgb, 255:red, 0; green, 0; blue, 0 }  ][line width=0.75]      (0, 0) circle [x radius= 3.35, y radius= 3.35]   ;
\draw    (218,70) -- (257,110) ;
\draw [shift={(257,110)}, rotate = 45.73] [color={rgb, 255:red, 0; green, 0; blue, 0 }  ][fill={rgb, 255:red, 0; green, 0; blue, 0 }  ][line width=0.75]      (0, 0) circle [x radius= 3.35, y radius= 3.35]   ;
\draw [shift={(218,70)}, rotate = 45.73] [color={rgb, 255:red, 0; green, 0; blue, 0 }  ][fill={rgb, 255:red, 0; green, 0; blue, 0 }  ][line width=0.75]      (0, 0) circle [x radius= 3.35, y radius= 3.35]   ;
\draw    (257,110) -- (296,70) ;
\draw [shift={(296,70)}, rotate = 314.27] [color={rgb, 255:red, 0; green, 0; blue, 0 }  ][fill={rgb, 255:red, 0; green, 0; blue, 0 }  ][line width=0.75]      (0, 0) circle [x radius= 3.35, y radius= 3.35]   ;
\draw [shift={(257,110)}, rotate = 314.27] [color={rgb, 255:red, 0; green, 0; blue, 0 }  ][fill={rgb, 255:red, 0; green, 0; blue, 0 }  ][line width=0.75]      (0, 0) circle [x radius= 3.35, y radius= 3.35]   ;
\draw [color={rgb, 255:red, 74; green, 144; blue, 226 }  ,draw opacity=1 ]   (120,70) -- (171,82) ;
\draw [shift={(171,82)}, rotate = 13.24] [color={rgb, 255:red, 74; green, 144; blue, 226 }  ,draw opacity=1 ][fill={rgb, 255:red, 74; green, 144; blue, 226 }  ,fill opacity=1 ][line width=0.75]      (0, 0) circle [x radius= 3.35, y radius= 3.35]   ;
\draw [shift={(120,70)}, rotate = 13.24] [color={rgb, 255:red, 74; green, 144; blue, 226 }  ,draw opacity=1 ][fill={rgb, 255:red, 74; green, 144; blue, 226 }  ,fill opacity=1 ][line width=0.75]      (0, 0) circle [x radius= 3.35, y radius= 3.35]   ;
\draw [color={rgb, 255:red, 208; green, 2; blue, 27 }  ,draw opacity=1 ]   (481.27,82) -- (528.27,70) ;
\draw [shift={(528.27,70)}, rotate = 345.68] [color={rgb, 255:red, 208; green, 2; blue, 27 }  ,draw opacity=1 ][fill={rgb, 255:red, 208; green, 2; blue, 27 }  ,fill opacity=1 ][line width=0.75]      (0, 0) circle [x radius= 3.35, y radius= 3.35]   ;
\draw [shift={(481.27,82)}, rotate = 345.68] [color={rgb, 255:red, 208; green, 2; blue, 27 }  ,draw opacity=1 ][fill={rgb, 255:red, 208; green, 2; blue, 27 }  ,fill opacity=1 ][line width=0.75]      (0, 0) circle [x radius= 3.35, y radius= 3.35]   ;
\draw [color={rgb, 255:red, 74; green, 144; blue, 226 }  ,draw opacity=1 ]   (81,30) -- (171,82) ;
\draw [shift={(171,82)}, rotate = 30.02] [color={rgb, 255:red, 74; green, 144; blue, 226 }  ,draw opacity=1 ][fill={rgb, 255:red, 74; green, 144; blue, 226 }  ,fill opacity=1 ][line width=0.75]      (0, 0) circle [x radius= 3.35, y radius= 3.35]   ;
\draw [shift={(81,30)}, rotate = 30.02] [color={rgb, 255:red, 74; green, 144; blue, 226 }  ,draw opacity=1 ][fill={rgb, 255:red, 74; green, 144; blue, 226 }  ,fill opacity=1 ][line width=0.75]      (0, 0) circle [x radius= 3.35, y radius= 3.35]   ;
\draw [color={rgb, 255:red, 74; green, 144; blue, 226 }  ,draw opacity=1 ]   (81,110) -- (171,82) ;
\draw [shift={(171,82)}, rotate = 342.72] [color={rgb, 255:red, 74; green, 144; blue, 226 }  ,draw opacity=1 ][fill={rgb, 255:red, 74; green, 144; blue, 226 }  ,fill opacity=1 ][line width=0.75]      (0, 0) circle [x radius= 3.35, y radius= 3.35]   ;
\draw [shift={(81,110)}, rotate = 342.72] [color={rgb, 255:red, 74; green, 144; blue, 226 }  ,draw opacity=1 ][fill={rgb, 255:red, 74; green, 144; blue, 226 }  ,fill opacity=1 ][line width=0.75]      (0, 0) circle [x radius= 3.35, y radius= 3.35]   ;
\draw    (240,159) -- (279,199) ;
\draw [shift={(279,199)}, rotate = 45.73] [color={rgb, 255:red, 0; green, 0; blue, 0 }  ][fill={rgb, 255:red, 0; green, 0; blue, 0 }  ][line width=0.75]      (0, 0) circle [x radius= 3.35, y radius= 3.35]   ;
\draw [shift={(240,159)}, rotate = 45.73] [color={rgb, 255:red, 0; green, 0; blue, 0 }  ][fill={rgb, 255:red, 0; green, 0; blue, 0 }  ][line width=0.75]      (0, 0) circle [x radius= 3.35, y radius= 3.35]   ;
\draw    (201,199) -- (240,239) ;
\draw [shift={(240,239)}, rotate = 45.73] [color={rgb, 255:red, 0; green, 0; blue, 0 }  ][fill={rgb, 255:red, 0; green, 0; blue, 0 }  ][line width=0.75]      (0, 0) circle [x radius= 3.35, y radius= 3.35]   ;
\draw [shift={(201,199)}, rotate = 45.73] [color={rgb, 255:red, 0; green, 0; blue, 0 }  ][fill={rgb, 255:red, 0; green, 0; blue, 0 }  ][line width=0.75]      (0, 0) circle [x radius= 3.35, y radius= 3.35]   ;
\draw    (240,239) -- (279,199) ;
\draw [shift={(279,199)}, rotate = 314.27] [color={rgb, 255:red, 0; green, 0; blue, 0 }  ][fill={rgb, 255:red, 0; green, 0; blue, 0 }  ][line width=0.75]      (0, 0) circle [x radius= 3.35, y radius= 3.35]   ;
\draw [shift={(240,239)}, rotate = 314.27] [color={rgb, 255:red, 0; green, 0; blue, 0 }  ][fill={rgb, 255:red, 0; green, 0; blue, 0 }  ][line width=0.75]      (0, 0) circle [x radius= 3.35, y radius= 3.35]   ;
\draw    (201,199) -- (240,159) ;
\draw [shift={(240,159)}, rotate = 314.27] [color={rgb, 255:red, 0; green, 0; blue, 0 }  ][fill={rgb, 255:red, 0; green, 0; blue, 0 }  ][line width=0.75]      (0, 0) circle [x radius= 3.35, y radius= 3.35]   ;
\draw [shift={(201,199)}, rotate = 314.27] [color={rgb, 255:red, 0; green, 0; blue, 0 }  ][fill={rgb, 255:red, 0; green, 0; blue, 0 }  ][line width=0.75]      (0, 0) circle [x radius= 3.35, y radius= 3.35]   ;
\draw    (171,82) -- (201,199) ;
\draw [shift={(201,199)}, rotate = 75.62] [color={rgb, 255:red, 0; green, 0; blue, 0 }  ][fill={rgb, 255:red, 0; green, 0; blue, 0 }  ][line width=0.75]      (0, 0) circle [x radius= 3.35, y radius= 3.35]   ;
\draw [shift={(171,82)}, rotate = 75.62] [color={rgb, 255:red, 0; green, 0; blue, 0 }  ][fill={rgb, 255:red, 0; green, 0; blue, 0 }  ][line width=0.75]      (0, 0) circle [x radius= 3.35, y radius= 3.35]   ;
\draw    (100,159) -- (139,199) ;
\draw [shift={(139,199)}, rotate = 45.73] [color={rgb, 255:red, 0; green, 0; blue, 0 }  ][fill={rgb, 255:red, 0; green, 0; blue, 0 }  ][line width=0.75]      (0, 0) circle [x radius= 3.35, y radius= 3.35]   ;
\draw [shift={(100,159)}, rotate = 45.73] [color={rgb, 255:red, 0; green, 0; blue, 0 }  ][fill={rgb, 255:red, 0; green, 0; blue, 0 }  ][line width=0.75]      (0, 0) circle [x radius= 3.35, y radius= 3.35]   ;
\draw    (61,199) -- (100,239) ;
\draw [shift={(100,239)}, rotate = 45.73] [color={rgb, 255:red, 0; green, 0; blue, 0 }  ][fill={rgb, 255:red, 0; green, 0; blue, 0 }  ][line width=0.75]      (0, 0) circle [x radius= 3.35, y radius= 3.35]   ;
\draw [shift={(61,199)}, rotate = 45.73] [color={rgb, 255:red, 0; green, 0; blue, 0 }  ][fill={rgb, 255:red, 0; green, 0; blue, 0 }  ][line width=0.75]      (0, 0) circle [x radius= 3.35, y radius= 3.35]   ;
\draw    (100,239) -- (139,199) ;
\draw [shift={(139,199)}, rotate = 314.27] [color={rgb, 255:red, 0; green, 0; blue, 0 }  ][fill={rgb, 255:red, 0; green, 0; blue, 0 }  ][line width=0.75]      (0, 0) circle [x radius= 3.35, y radius= 3.35]   ;
\draw [shift={(100,239)}, rotate = 314.27] [color={rgb, 255:red, 0; green, 0; blue, 0 }  ][fill={rgb, 255:red, 0; green, 0; blue, 0 }  ][line width=0.75]      (0, 0) circle [x radius= 3.35, y radius= 3.35]   ;
\draw    (61,199) -- (100,159) ;
\draw [shift={(100,159)}, rotate = 314.27] [color={rgb, 255:red, 0; green, 0; blue, 0 }  ][fill={rgb, 255:red, 0; green, 0; blue, 0 }  ][line width=0.75]      (0, 0) circle [x radius= 3.35, y radius= 3.35]   ;
\draw [shift={(61,199)}, rotate = 314.27] [color={rgb, 255:red, 0; green, 0; blue, 0 }  ][fill={rgb, 255:red, 0; green, 0; blue, 0 }  ][line width=0.75]      (0, 0) circle [x radius= 3.35, y radius= 3.35]   ;
\draw    (139,199) -- (171,82) ;
\draw [shift={(171,82)}, rotate = 285.3] [color={rgb, 255:red, 0; green, 0; blue, 0 }  ][fill={rgb, 255:red, 0; green, 0; blue, 0 }  ][line width=0.75]      (0, 0) circle [x radius= 3.35, y radius= 3.35]   ;
\draw [shift={(139,199)}, rotate = 285.3] [color={rgb, 255:red, 0; green, 0; blue, 0 }  ][fill={rgb, 255:red, 0; green, 0; blue, 0 }  ][line width=0.75]      (0, 0) circle [x radius= 3.35, y radius= 3.35]   ;
\draw    (42,70) -- (81,30) ;
\draw [shift={(81,30)}, rotate = 314.27] [color={rgb, 255:red, 0; green, 0; blue, 0 }  ][fill={rgb, 255:red, 0; green, 0; blue, 0 }  ][line width=0.75]      (0, 0) circle [x radius= 3.35, y radius= 3.35]   ;
\draw [shift={(42,70)}, rotate = 314.27] [color={rgb, 255:red, 0; green, 0; blue, 0 }  ][fill={rgb, 255:red, 0; green, 0; blue, 0 }  ][line width=0.75]      (0, 0) circle [x radius= 3.35, y radius= 3.35]   ;
\draw    (81,110) -- (120,70) ;
\draw [shift={(120,70)}, rotate = 314.27] [color={rgb, 255:red, 0; green, 0; blue, 0 }  ][fill={rgb, 255:red, 0; green, 0; blue, 0 }  ][line width=0.75]      (0, 0) circle [x radius= 3.35, y radius= 3.35]   ;
\draw [shift={(81,110)}, rotate = 314.27] [color={rgb, 255:red, 0; green, 0; blue, 0 }  ][fill={rgb, 255:red, 0; green, 0; blue, 0 }  ][line width=0.75]      (0, 0) circle [x radius= 3.35, y radius= 3.35]   ;
\draw    (218,70) -- (257,30) ;
\draw [shift={(257,30)}, rotate = 314.27] [color={rgb, 255:red, 0; green, 0; blue, 0 }  ][fill={rgb, 255:red, 0; green, 0; blue, 0 }  ][line width=0.75]      (0, 0) circle [x radius= 3.35, y radius= 3.35]   ;
\draw [shift={(218,70)}, rotate = 314.27] [color={rgb, 255:red, 0; green, 0; blue, 0 }  ][fill={rgb, 255:red, 0; green, 0; blue, 0 }  ][line width=0.75]      (0, 0) circle [x radius= 3.35, y radius= 3.35]   ;
\draw    (391.27,30) -- (430.27,70) ;
\draw [shift={(430.27,70)}, rotate = 45.73] [color={rgb, 255:red, 0; green, 0; blue, 0 }  ][fill={rgb, 255:red, 0; green, 0; blue, 0 }  ][line width=0.75]      (0, 0) circle [x radius= 3.35, y radius= 3.35]   ;
\draw [shift={(391.27,30)}, rotate = 45.73] [color={rgb, 255:red, 0; green, 0; blue, 0 }  ][fill={rgb, 255:red, 0; green, 0; blue, 0 }  ][line width=0.75]      (0, 0) circle [x radius= 3.35, y radius= 3.35]   ;
\draw    (352.27,70) -- (391.27,110) ;
\draw [shift={(391.27,110)}, rotate = 45.73] [color={rgb, 255:red, 0; green, 0; blue, 0 }  ][fill={rgb, 255:red, 0; green, 0; blue, 0 }  ][line width=0.75]      (0, 0) circle [x radius= 3.35, y radius= 3.35]   ;
\draw [shift={(352.27,70)}, rotate = 45.73] [color={rgb, 255:red, 0; green, 0; blue, 0 }  ][fill={rgb, 255:red, 0; green, 0; blue, 0 }  ][line width=0.75]      (0, 0) circle [x radius= 3.35, y radius= 3.35]   ;
\draw    (567.27,30) -- (606.27,70) ;
\draw [shift={(606.27,70)}, rotate = 45.73] [color={rgb, 255:red, 0; green, 0; blue, 0 }  ][fill={rgb, 255:red, 0; green, 0; blue, 0 }  ][line width=0.75]      (0, 0) circle [x radius= 3.35, y radius= 3.35]   ;
\draw [shift={(567.27,30)}, rotate = 45.73] [color={rgb, 255:red, 0; green, 0; blue, 0 }  ][fill={rgb, 255:red, 0; green, 0; blue, 0 }  ][line width=0.75]      (0, 0) circle [x radius= 3.35, y radius= 3.35]   ;
\draw    (528.27,70) -- (567.27,110) ;
\draw [shift={(567.27,110)}, rotate = 45.73] [color={rgb, 255:red, 0; green, 0; blue, 0 }  ][fill={rgb, 255:red, 0; green, 0; blue, 0 }  ][line width=0.75]      (0, 0) circle [x radius= 3.35, y radius= 3.35]   ;
\draw [shift={(528.27,70)}, rotate = 45.73] [color={rgb, 255:red, 0; green, 0; blue, 0 }  ][fill={rgb, 255:red, 0; green, 0; blue, 0 }  ][line width=0.75]      (0, 0) circle [x radius= 3.35, y radius= 3.35]   ;
\draw    (567.27,110) -- (606.27,70) ;
\draw [shift={(606.27,70)}, rotate = 314.27] [color={rgb, 255:red, 0; green, 0; blue, 0 }  ][fill={rgb, 255:red, 0; green, 0; blue, 0 }  ][line width=0.75]      (0, 0) circle [x radius= 3.35, y radius= 3.35]   ;
\draw [shift={(567.27,110)}, rotate = 314.27] [color={rgb, 255:red, 0; green, 0; blue, 0 }  ][fill={rgb, 255:red, 0; green, 0; blue, 0 }  ][line width=0.75]      (0, 0) circle [x radius= 3.35, y radius= 3.35]   ;
\draw    (550.27,159) -- (589.27,199) ;
\draw [shift={(589.27,199)}, rotate = 45.73] [color={rgb, 255:red, 0; green, 0; blue, 0 }  ][fill={rgb, 255:red, 0; green, 0; blue, 0 }  ][line width=0.75]      (0, 0) circle [x radius= 3.35, y radius= 3.35]   ;
\draw [shift={(550.27,159)}, rotate = 45.73] [color={rgb, 255:red, 0; green, 0; blue, 0 }  ][fill={rgb, 255:red, 0; green, 0; blue, 0 }  ][line width=0.75]      (0, 0) circle [x radius= 3.35, y radius= 3.35]   ;
\draw    (511.27,199) -- (550.27,239) ;
\draw [shift={(550.27,239)}, rotate = 45.73] [color={rgb, 255:red, 0; green, 0; blue, 0 }  ][fill={rgb, 255:red, 0; green, 0; blue, 0 }  ][line width=0.75]      (0, 0) circle [x radius= 3.35, y radius= 3.35]   ;
\draw [shift={(511.27,199)}, rotate = 45.73] [color={rgb, 255:red, 0; green, 0; blue, 0 }  ][fill={rgb, 255:red, 0; green, 0; blue, 0 }  ][line width=0.75]      (0, 0) circle [x radius= 3.35, y radius= 3.35]   ;
\draw    (550.27,239) -- (589.27,199) ;
\draw [shift={(589.27,199)}, rotate = 314.27] [color={rgb, 255:red, 0; green, 0; blue, 0 }  ][fill={rgb, 255:red, 0; green, 0; blue, 0 }  ][line width=0.75]      (0, 0) circle [x radius= 3.35, y radius= 3.35]   ;
\draw [shift={(550.27,239)}, rotate = 314.27] [color={rgb, 255:red, 0; green, 0; blue, 0 }  ][fill={rgb, 255:red, 0; green, 0; blue, 0 }  ][line width=0.75]      (0, 0) circle [x radius= 3.35, y radius= 3.35]   ;
\draw    (511.27,199) -- (550.27,159) ;
\draw [shift={(550.27,159)}, rotate = 314.27] [color={rgb, 255:red, 0; green, 0; blue, 0 }  ][fill={rgb, 255:red, 0; green, 0; blue, 0 }  ][line width=0.75]      (0, 0) circle [x radius= 3.35, y radius= 3.35]   ;
\draw [shift={(511.27,199)}, rotate = 314.27] [color={rgb, 255:red, 0; green, 0; blue, 0 }  ][fill={rgb, 255:red, 0; green, 0; blue, 0 }  ][line width=0.75]      (0, 0) circle [x radius= 3.35, y radius= 3.35]   ;
\draw    (481.27,82) -- (511.27,199) ;
\draw [shift={(511.27,199)}, rotate = 75.62] [color={rgb, 255:red, 0; green, 0; blue, 0 }  ][fill={rgb, 255:red, 0; green, 0; blue, 0 }  ][line width=0.75]      (0, 0) circle [x radius= 3.35, y radius= 3.35]   ;
\draw [shift={(481.27,82)}, rotate = 75.62] [color={rgb, 255:red, 0; green, 0; blue, 0 }  ][fill={rgb, 255:red, 0; green, 0; blue, 0 }  ][line width=0.75]      (0, 0) circle [x radius= 3.35, y radius= 3.35]   ;
\draw    (410.27,159) -- (449.27,199) ;
\draw [shift={(449.27,199)}, rotate = 45.73] [color={rgb, 255:red, 0; green, 0; blue, 0 }  ][fill={rgb, 255:red, 0; green, 0; blue, 0 }  ][line width=0.75]      (0, 0) circle [x radius= 3.35, y radius= 3.35]   ;
\draw [shift={(410.27,159)}, rotate = 45.73] [color={rgb, 255:red, 0; green, 0; blue, 0 }  ][fill={rgb, 255:red, 0; green, 0; blue, 0 }  ][line width=0.75]      (0, 0) circle [x radius= 3.35, y radius= 3.35]   ;
\draw    (371.27,199) -- (410.27,239) ;
\draw [shift={(410.27,239)}, rotate = 45.73] [color={rgb, 255:red, 0; green, 0; blue, 0 }  ][fill={rgb, 255:red, 0; green, 0; blue, 0 }  ][line width=0.75]      (0, 0) circle [x radius= 3.35, y radius= 3.35]   ;
\draw [shift={(371.27,199)}, rotate = 45.73] [color={rgb, 255:red, 0; green, 0; blue, 0 }  ][fill={rgb, 255:red, 0; green, 0; blue, 0 }  ][line width=0.75]      (0, 0) circle [x radius= 3.35, y radius= 3.35]   ;
\draw    (410.27,239) -- (449.27,199) ;
\draw [shift={(449.27,199)}, rotate = 314.27] [color={rgb, 255:red, 0; green, 0; blue, 0 }  ][fill={rgb, 255:red, 0; green, 0; blue, 0 }  ][line width=0.75]      (0, 0) circle [x radius= 3.35, y radius= 3.35]   ;
\draw [shift={(410.27,239)}, rotate = 314.27] [color={rgb, 255:red, 0; green, 0; blue, 0 }  ][fill={rgb, 255:red, 0; green, 0; blue, 0 }  ][line width=0.75]      (0, 0) circle [x radius= 3.35, y radius= 3.35]   ;
\draw    (371.27,199) -- (410.27,159) ;
\draw [shift={(410.27,159)}, rotate = 314.27] [color={rgb, 255:red, 0; green, 0; blue, 0 }  ][fill={rgb, 255:red, 0; green, 0; blue, 0 }  ][line width=0.75]      (0, 0) circle [x radius= 3.35, y radius= 3.35]   ;
\draw [shift={(371.27,199)}, rotate = 314.27] [color={rgb, 255:red, 0; green, 0; blue, 0 }  ][fill={rgb, 255:red, 0; green, 0; blue, 0 }  ][line width=0.75]      (0, 0) circle [x radius= 3.35, y radius= 3.35]   ;
\draw    (449.27,199) -- (481.27,82) ;
\draw [shift={(481.27,82)}, rotate = 285.3] [color={rgb, 255:red, 0; green, 0; blue, 0 }  ][fill={rgb, 255:red, 0; green, 0; blue, 0 }  ][line width=0.75]      (0, 0) circle [x radius= 3.35, y radius= 3.35]   ;
\draw [shift={(449.27,199)}, rotate = 285.3] [color={rgb, 255:red, 0; green, 0; blue, 0 }  ][fill={rgb, 255:red, 0; green, 0; blue, 0 }  ][line width=0.75]      (0, 0) circle [x radius= 3.35, y radius= 3.35]   ;
\draw    (352.27,70) -- (391.27,30) ;
\draw [shift={(391.27,30)}, rotate = 314.27] [color={rgb, 255:red, 0; green, 0; blue, 0 }  ][fill={rgb, 255:red, 0; green, 0; blue, 0 }  ][line width=0.75]      (0, 0) circle [x radius= 3.35, y radius= 3.35]   ;
\draw [shift={(352.27,70)}, rotate = 314.27] [color={rgb, 255:red, 0; green, 0; blue, 0 }  ][fill={rgb, 255:red, 0; green, 0; blue, 0 }  ][line width=0.75]      (0, 0) circle [x radius= 3.35, y radius= 3.35]   ;
\draw    (391.27,110) -- (430.27,70) ;
\draw [shift={(430.27,70)}, rotate = 314.27] [color={rgb, 255:red, 0; green, 0; blue, 0 }  ][fill={rgb, 255:red, 0; green, 0; blue, 0 }  ][line width=0.75]      (0, 0) circle [x radius= 3.35, y radius= 3.35]   ;
\draw [shift={(391.27,110)}, rotate = 314.27] [color={rgb, 255:red, 0; green, 0; blue, 0 }  ][fill={rgb, 255:red, 0; green, 0; blue, 0 }  ][line width=0.75]      (0, 0) circle [x radius= 3.35, y radius= 3.35]   ;
\draw    (528.27,70) -- (567.27,30) ;
\draw [shift={(567.27,30)}, rotate = 314.27] [color={rgb, 255:red, 0; green, 0; blue, 0 }  ][fill={rgb, 255:red, 0; green, 0; blue, 0 }  ][line width=0.75]      (0, 0) circle [x radius= 3.35, y radius= 3.35]   ;
\draw [shift={(528.27,70)}, rotate = 314.27] [color={rgb, 255:red, 0; green, 0; blue, 0 }  ][fill={rgb, 255:red, 0; green, 0; blue, 0 }  ][line width=0.75]      (0, 0) circle [x radius= 3.35, y radius= 3.35]   ;

\draw (169,64) node [anchor=north west][inner sep=0.75pt]  [font=\footnotesize] [align=left] {$\displaystyle v_{1}$};
\draw (25,53) node [anchor=north west][inner sep=0.75pt]  [font=\footnotesize] [align=left] {$\displaystyle v_{2}$};
\draw (82,13) node [anchor=north west][inner sep=0.75pt]  [font=\footnotesize] [align=left] {$\displaystyle v_{3}$};
\draw (122,73) node [anchor=north west][inner sep=0.75pt]  [font=\footnotesize] [align=left] {$\displaystyle v_{4}$};
\draw (66,112) node [anchor=north west][inner sep=0.75pt]  [font=\footnotesize] [align=left] {$\displaystyle v_{5}$};
\draw (203,53) node [anchor=north west][inner sep=0.75pt]  [font=\footnotesize] [align=left] {$\displaystyle v_{6}$};
\draw (259,13) node [anchor=north west][inner sep=0.75pt]  [font=\footnotesize] [align=left] {$\displaystyle v_{7}$};
\draw (298,73) node [anchor=north west][inner sep=0.75pt]  [font=\footnotesize] [align=left] {$\displaystyle v_{8}$};
\draw (259,113) node [anchor=north west][inner sep=0.75pt]  [font=\footnotesize] [align=left] {$\displaystyle v_{9}$};
\draw (219.67,44.73) node [anchor=north west][inner sep=0.75pt]  [font=\footnotesize,rotate=-315.65] [align=left] {$\displaystyle ( 12)$};
\draw (269.67,98.73) node [anchor=north west][inner sep=0.75pt]  [font=\footnotesize,rotate=-315.65] [align=left] {$\displaystyle ( 12)$};
\draw (277.48,30.25) node [anchor=north west][inner sep=0.75pt]  [font=\footnotesize,rotate=-44.47] [align=left] {$\displaystyle ( 34)$};
\draw (226.48,82.25) node [anchor=north west][inner sep=0.75pt]  [font=\footnotesize,rotate=-44.47] [align=left] {$\displaystyle ( 34)$};
\draw (71.96,96.27) node [anchor=north west][inner sep=0.75pt]  [font=\footnotesize,rotate=-314.21] [align=left] {$\displaystyle ( 12)( 34)$};
\draw (33.96,55.27) node [anchor=north west][inner sep=0.75pt]  [font=\footnotesize,rotate=-314.21] [align=left] {$\displaystyle ( 12)( 34)$};
\draw (180,202) node [anchor=north west][inner sep=0.75pt]  [font=\footnotesize] [align=left] {$\displaystyle v_{14}$};
\draw (243,146) node [anchor=north west][inner sep=0.75pt]  [font=\footnotesize] [align=left] {$\displaystyle v_{15}$};
\draw (281,202) node [anchor=north west][inner sep=0.75pt]  [font=\footnotesize] [align=left] {$\displaystyle v_{16}$};
\draw (242,242) node [anchor=north west][inner sep=0.75pt]  [font=\footnotesize] [align=left] {$\displaystyle v_{17}$};
\draw (202.67,173.73) node [anchor=north west][inner sep=0.75pt]  [font=\footnotesize,rotate=-315.65] [align=left] {$\displaystyle ( 12)$};
\draw (252.67,227.73) node [anchor=north west][inner sep=0.75pt]  [font=\footnotesize,rotate=-315.65] [align=left] {$\displaystyle ( 12)$};
\draw (260.48,159.25) node [anchor=north west][inner sep=0.75pt]  [font=\footnotesize,rotate=-44.47] [align=left] {$\displaystyle ( 34)$};
\draw (209.48,211.25) node [anchor=north west][inner sep=0.75pt]  [font=\footnotesize,rotate=-44.47] [align=left] {$\displaystyle ( 34)$};
\draw (39,190) node [anchor=north west][inner sep=0.75pt]  [font=\footnotesize] [align=left] {$\displaystyle v_{10}$};
\draw (102,145) node [anchor=north west][inner sep=0.75pt]  [font=\footnotesize] [align=left] {$\displaystyle v_{11}$};
\draw (141,202) node [anchor=north west][inner sep=0.75pt]  [font=\footnotesize] [align=left] {$\displaystyle v_{12}$};
\draw (85,241) node [anchor=north west][inner sep=0.75pt]  [font=\footnotesize] [align=left] {$\displaystyle v_{13}$};
\draw (90.96,225.27) node [anchor=north west][inner sep=0.75pt]  [font=\footnotesize,rotate=-314.21] [align=left] {$\displaystyle ( 12)( 34)$};
\draw (52.96,184.27) node [anchor=north west][inner sep=0.75pt]  [font=\footnotesize,rotate=-314.21] [align=left] {$\displaystyle ( 12)( 34)$};
\draw (466.27,66) node [anchor=north west][inner sep=0.75pt]  [font=\footnotesize] [align=left] {$\displaystyle v_{1}$};
\draw (335.27,53) node [anchor=north west][inner sep=0.75pt]  [font=\footnotesize] [align=left] {$\displaystyle v_{2}$};
\draw (392.27,13) node [anchor=north west][inner sep=0.75pt]  [font=\footnotesize] [align=left] {$\displaystyle v_{3}$};
\draw (432.27,73) node [anchor=north west][inner sep=0.75pt]  [font=\footnotesize] [align=left] {$\displaystyle v_{4}$};
\draw (376.27,112) node [anchor=north west][inner sep=0.75pt]  [font=\footnotesize] [align=left] {$\displaystyle v_{5}$};
\draw (513.27,53) node [anchor=north west][inner sep=0.75pt]  [font=\footnotesize] [align=left] {$\displaystyle v_{6}$};
\draw (569.27,13) node [anchor=north west][inner sep=0.75pt]  [font=\footnotesize] [align=left] {$\displaystyle v_{7}$};
\draw (608.27,73) node [anchor=north west][inner sep=0.75pt]  [font=\footnotesize] [align=left] {$\displaystyle v_{8}$};
\draw (569.27,113) node [anchor=north west][inner sep=0.75pt]  [font=\footnotesize] [align=left] {$\displaystyle v_{9}$};
\draw (529.93,44.73) node [anchor=north west][inner sep=0.75pt]  [font=\footnotesize,rotate=-315.65] [align=left] {$\displaystyle ( 12)$};
\draw (579.93,98.73) node [anchor=north west][inner sep=0.75pt]  [font=\footnotesize,rotate=-315.65] [align=left] {$\displaystyle ( 12)$};
\draw (587.75,30.25) node [anchor=north west][inner sep=0.75pt]  [font=\footnotesize,rotate=-44.47] [align=left] {$\displaystyle ( 34)$};
\draw (536.75,82.25) node [anchor=north west][inner sep=0.75pt]  [font=\footnotesize,rotate=-44.47] [align=left] {$\displaystyle ( 34)$};
\draw (382.23,96.27) node [anchor=north west][inner sep=0.75pt]  [font=\footnotesize,rotate=-314.21] [align=left] {$\displaystyle ( 12)( 34)$};
\draw (344.23,55.27) node [anchor=north west][inner sep=0.75pt]  [font=\footnotesize,rotate=-314.21] [align=left] {$\displaystyle ( 12)( 34)$};
\draw (490.27,202) node [anchor=north west][inner sep=0.75pt]  [font=\footnotesize] [align=left] {$\displaystyle v_{14}$};
\draw (553.27,146) node [anchor=north west][inner sep=0.75pt]  [font=\footnotesize] [align=left] {$\displaystyle v_{15}$};
\draw (591.27,202) node [anchor=north west][inner sep=0.75pt]  [font=\footnotesize] [align=left] {$\displaystyle v_{16}$};
\draw (552.27,242) node [anchor=north west][inner sep=0.75pt]  [font=\footnotesize] [align=left] {$\displaystyle v_{17}$};
\draw (512.93,173.73) node [anchor=north west][inner sep=0.75pt]  [font=\footnotesize,rotate=-315.65] [align=left] {$\displaystyle ( 12)$};
\draw (562.93,227.73) node [anchor=north west][inner sep=0.75pt]  [font=\footnotesize,rotate=-315.65] [align=left] {$\displaystyle ( 12)$};
\draw (570.75,159.25) node [anchor=north west][inner sep=0.75pt]  [font=\footnotesize,rotate=-44.47] [align=left] {$\displaystyle ( 34)$};
\draw (519.75,211.25) node [anchor=north west][inner sep=0.75pt]  [font=\footnotesize,rotate=-44.47] [align=left] {$\displaystyle ( 34)$};
\draw (348.27,189) node [anchor=north west][inner sep=0.75pt]  [font=\footnotesize] [align=left] {$\displaystyle v_{10}$};
\draw (412.27,145) node [anchor=north west][inner sep=0.75pt]  [font=\footnotesize] [align=left] {$\displaystyle v_{11}$};
\draw (451.27,202) node [anchor=north west][inner sep=0.75pt]  [font=\footnotesize] [align=left] {$\displaystyle v_{12}$};
\draw (395.27,241) node [anchor=north west][inner sep=0.75pt]  [font=\footnotesize] [align=left] {$\displaystyle v_{13}$};
\draw (401.23,225.27) node [anchor=north west][inner sep=0.75pt]  [font=\footnotesize,rotate=-314.21] [align=left] {$\displaystyle ( 12)( 34)$};
\draw (363.23,184.27) node [anchor=north west][inner sep=0.75pt]  [font=\footnotesize,rotate=-314.21] [align=left] {$\displaystyle ( 12)( 34)$};

\end{tikzpicture}
    \caption{A pair of $\pi$-cospectral not switching isomorphic $S_4$-gain graphs from Example~\ref{ex:picosp}.}
    \label{fig:picosp}
\end{figure}
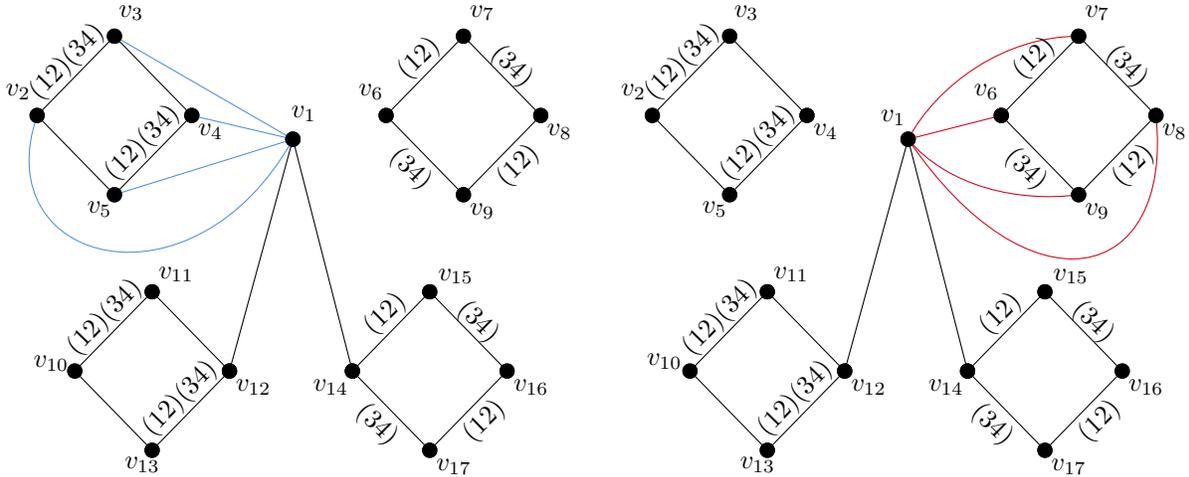

\begin{example}\label{ex:picosp}
Consider the $S_4$-gain graph $(\Gamma,\psi)$ and a partition $\alpha$ from Example~\ref{ex:piWQH}. It was already shown that $\alpha$ is a $\pi$-WQH partition where $\pi$ is a unitary representation that maps an element of $S_4$ to a respective permutation matrix of size $4$.
The graph $(\Gamma^\alpha,\psi^\alpha)$ (on the right in Figure~\ref{fig:picosp}) is not switching isomorphic to $(\Gamma,\psi)$. Additionally, there is no partition that both satisfies the conditions for $\pi$-GM partition for some $\pi$ (defined in \cite{CDS2022}) and produces a graph that is switching isomorphic to $(\Gamma^\alpha,\psi^\alpha)$.
\end{example}

\section{Concluding remarks}

The generalization of the spectral graph theory to gain graphs is far from trivial. Switching methods for gain graphs were previously described in the literature for some particular cases. In \cite[Section 4]{Belardo1}, WQH-switching (referred to as Modified Godsil-McKay switching) was generalized to signed graphs. In \cite[Section 3]{Belardo2}, the same switching was described for complex unit gain graphs. In both cases, adaptations of WQH-switching were used to construct cospectral graphs with respect to the identical representation. For a general group $G\not\subseteq\C$, an approach based on group representations was introduced in \cite{CDD2021} in order to discuss the spectrum of a $G$-gain graph with respect to any representation $\pi$. In \cite{CD2022}, the notion of $G$-cospectrality independent of the choice of $\pi$ was first introduced, and in \cite{CDS2022}, generalizations of GM-switching with respect to both $G$- and $\pi$-cospectrality were described.
To the authors' knowledge, only the work by Cavaleri, Donno and Spessato \cite{CD2022,CDS2022} contributed before to the investigation of the cospectrality in the context of gain graphs over a general group $G$.

\subsection*{Acknowledgements}
Aida Abiad is partially supported by FWO (Research Foundation Flanders) via the grant 1285921N. This research is supported by NWO (Dutch Research Council) via an ENW-KLEIN-1 project (OCENW.KLEIN.475).



\begin{thebibliography}{99}

\bibitem{ABDN2018} S. Akbari, F. Belardo, E. Dodongeh, M.A. Nematollahi. Spectral characterizations of signed cycles. \emph{Linear Algebra Appl.} 553:307--327 (2018).

\bibitem{AHMM2018} S. Akbari, W.H. Haemers, H.R. Maimani, L. Parsaei~Maid. Signed graphs cospectral with the path. \emph{Linear Algebra Appl.} 553:104--116 (2018).

\bibitem{Belardo1} F. Belardo, M. Brunetti, M. Cavaleri, A. Donno. Constructing cospectral signed graphs. \emph{Linear Multilinear Algebra.} 69(14):2717--2732 (2021).

\bibitem{Belardo2} F. Belardo, M. Brunetti, M. Cavaleri, A. Donno. Godsil-McKay switching for mixed and gain graphs over the circle group. \emph{Linear Algebra Appl.} 614:256--269 (2021).

\bibitem{BBCRS2022} F. Belardo, M. Brunetti, N.J. Coble, N. Reff, H. Skogman. Spectra of quaternion unit gain graphs. \emph{Linear Algebra Appl.} 632:15--49 (2022).

\bibitem{BP2015} F. Belardo, P. Petecki. Spectral characterizations of signed lollipop graphs. \emph{Linear Algebra Appl.} 480:144--167 (2015).

\bibitem{CDD2021}
M. Cavaleri, D. D’Angeli, A. Donno. A group representation approach to balance of gain graphs. \emph{J. Algebraic Combin.} 54(1): 265--293 (2021).

\bibitem{CDS2022}
M. Cavaleri, A. Donno, S. Spessato. Godsil-McKay switchings for gain graphs. arxiv:2207.10986v1 (2022).

\bibitem{CD2022}
M. Cavaleri, A. Donno. On cospectrality of gain graphs. \emph{Special Matrices} 10(1): 343-365 (2022).


\bibitem{gm1982}
C.D. Godsil, B.D. McKay. Constructing cospectral graphs. \emph{Aequationes Math.} 25:257--268 (1982).

\bibitem{GT1977}
J.L. Gross, T.W. Tucker. Generating all graph coverings by permutation voltage assignments. \emph{Discrete Math.} 18(3) (1977), 273--283.

\bibitem{H1953}
F. Harary. On the notion of balance of a signed graph, \emph{Michigan Math. J.} 2:143--146 (1953–1954). 

\bibitem{QJW2020}
L. Qiu, Y. Ji, W. Wang. On a theorem of Godsil and McKay concerning the construction of cospectral graphs. \emph{Linear Algebra Appl.} 603:265--274 (2020).

\bibitem{R2012}
N. Reff. Spectral properties of complex unit gain graphs. \emph{Linear Algebra Appl.} 436(9):3165--3176 (2012).

\bibitem{WQH2019}
W. Wang, L. Qiu, Y. Hu. Cospectral graphs, GM-switching and regular rational orthogonal matrices of level $p$. \emph{Linear Algebra Appl.} 563:154--177 (2019).

\bibitem{WD2022} P. Wissing, E.R. van Dam. Spectral fundamentals and characterizations of signed directed graphs. \emph{J. Combin. Theory} 187 (2022).

\bibitem{z1981}
T. Zaslavsky. Characterizations of signed graphs. \emph{J. Graph Theory} 5(4):401--406 (1981).

\bibitem{z1989}
T. Zaslavsky. Biased graphs. I. Bias, balance, and gains. \emph{J. Combin. Theory} 47(1):32--52 (1989).

\bibitem{zaslavskybib} T. Zaslavsky. A mathematical bibliography of signed and gain graphs and allied areas. \emph{Electron. J. Combin.} \#DS8: Dec 21, 2018.
\end{thebibliography}
\end{document}